\title{Random Tur\'an theorem for hypergraph cycles}
\author
{Dhruv Mubayi\thanks{Department of Mathematics, Statistics, and Computer Science, University of Illinois, Chicago, IL, 60607 USA. Email: mubayi@uic.edu.
Research partially supported by NSF awards DMS-1763317 and DMS-1952767.}
\and Liana Yepremyan \thanks{Department of Mathematics, Statistics, and Computer Science, University of Illinois at Chicago, Chicago, USA,
London School of Economics, Department of Mathematics, London, UK,
e-mail:lyepre2@uic.edu, l.yepremyan@lse.ac.uk, 
Research supported by Marie Sklodowska Curie Global Fellowship, H2020-MSCA-IF-2018:846304}}
\date{\today}
\newcommand{\eps}{\ensuremath{\varepsilon}}
\newtheorem{theorem}{Theorem}[section] 
\newtheorem{claim}[theorem]{\bf Claim}
\newtheorem{lemma}[theorem]{\bf Lemma}
\newtheorem{conjecture}[theorem]{\bf Conjecture}
\newtheorem{proposition}[theorem]{\bf Proposition}
\newtheorem{corollary}[theorem]{\bf Corollary}
\newcommand\numberthis{\addtocounter{equation}{1}\tag{\theequation}}
\newcommand\numeq[1]%
\newcommand{\Prob}{\mathbb{P}}
\newcommand{\E}{\mathbb{E}}
\newcommand{\ex}{{\rm ex}}
\begin{document}
\setlength{\baselineskip}{12pt}

\maketitle
\begin{abstract}  
Given $r$-uniform hypergraphs $G$ and $H$ the Tur\'an number $\ex(G, H)$   is the  maximum number of edges in an  $H$-free subgraph of $G$.   We study the typical value of  $\ex(G, H)$ when $G=G_{n,p}^{(r)}$, the Erd\H{o}s-R\'enyi random $r$-uniform hypergraph,   and $H=C_{2\ell}^{(r)}$, the $r$-uniform linear cycle of length $2\ell$. The case of graphs ($r=2$) is a longstanding open problem that has been investigated by many researchers.
We determine $\ex(G_{n,p}^{(r)}, C_{2\ell}^{(r)})$ up to polylogarithmic factors for all but a small interval of values of $p=p(n)$ whose length decreases as $\ell$ grows. 

Our main technical contribution is a balanced supersaturation result for linear even cycles which  improves upon previous such results by Ferber-Mckinley-Samotij  and Balogh-Narayanan-Skokan. The novelty  is that   the supersaturation result depends on the codegree of some pairs of vertices   in the underlying hypergraph. This approach could be used to prove similar results for other hypergraphs $H$.

\end{abstract}
\begin{section}{Introduction}
Write $e(G)$ for the number of edges in hypergraph $G$. Let $G$ and $H$ be $r$-uniform hypergraphs (henceforth $r$-graphs). The Tur\'an number $\ex(G, H)$  is the   maximum of $e(G')$ over all $H$-free subgraphs of $G' \subset G$. When $G=K_{n}^{(r)}$, the complete $r$-graph on $n$ vertices, $\ex(G, H)$ is simply denoted by $\ex(n,H)$. Determining $\ex(n,H)$  and its order of magnitude for large $n$ is a central problem in extremal (hyper)graph theory, known as the Tur\'an problem of $H$. For more on Tur\'an numbers, we refer the reader to the excellent surveys~\cite{FS} for graphs and~\cite{keevash} for hypergraphs.

In this paper we study $\ex(G,H)$ when $G$ is the random $r$-graph $G_{n,p}^{(r)}$, and $H$ is a linear even cycle. Here $G_{n,p}^{(r)}$ is the $r$-graph  on $n$ labelled vertices whose edges are independently present with probability $p=p(n)$. We use standard asymptotic notation.  Given functions $f,g: \mathbb R^+ \to \mathbb R^+$,
  we write $f(n)\ll g(n)$ to mean $f(n)/g(n)\rightarrow 0$ as $n\rightarrow \infty$, $f(n) = O(g(n))$ to mean that there is an absolute positive constant $C$ such that $f(n) < C g(n)$, $f(n)=\Omega(n)$ to mean that $g(n) = O(f(n))$ and $f(n) = \Theta(g(n))$ to mean that $f(n)=O(g(n))$ and $f(n)= \Omega(n)$. 
 Throughout this paper, we say that a statement depending on $n$ holds asymptotically almost surely (abbreviated a.a.s.) if the probability that it holds tends to $1$ as $n$ tends to infinity.
 All our theorems will be statements that hold a.a.s. in the probability space $G_{n,p}^{(r)}$. 

 The random variable $\ex(G_{n,p},H) = \ex(G_{n,p}^{(2)}, H)$ was first considered by
Babai, Simonovits, and Spencer~\cite{BSS} who treated the case in which $H$ has chromatic
number three and $p$ is a constant. The systematic study of $\ex(G_{n,p},H)$  was initiated by Kohayakawa, Luczak and R\"{o}dl~\cite{KLR} (see the survey~\cite{RS} for more extremal results in random graphs). One of their  conjectures resolved independently by Conlon
and Gowers~\cite{CG} and by Schacht~\cite{S} determines the asymptotic value of $\ex(G_{n,p}, H)$ whenever $H$
has chromatic number at least three.

The behaviour of $\ex(G_{n,p}, H)$ when $H$ is bipartite is a wide open problem that is closely  related to  the order of magnitude of the usual Tur\'an numbers $\ex(n, H)$. One case of bipartite $H$ that has been extensively studied is when $H=C_{2\ell}$, the even cycle on $2\ell$ vertices. Haxell, Kohayakawa and \L{}uczak~\cite{HKL} determined the so-called \emph{threshold}  $p$ for $H=C_{2\ell}$.  Namely they showed that a.a.s. if $p\gg n^{-1+1/(2\ell-1)} $ then $\ex(G_{n,p}, C_{2\ell})\ll e(G_{n,p})$, and if $p\ll n^{-1+1/(2\ell-1)}$  then  $\ex(G_{n,p},C_{2\ell})=(1-o(1))e(G_{n,p})$. Kohayakawa, Kreuter and Steger~\cite{KKS} improved on the second part and obtained more precise bounds for a certain range of $p$.
Finally, using the container method, Morris and Saxton~\cite{MS} further improved the upper bounds on  $\ex(G_{n,p}, C_{2\ell})$ for a broader range of $p$.

\begin{theorem} [\cite{HKL, KKS, MS}] 
\label{thm:graphcycles}For every $\ell\geq 2$, there exists  $C=C(\ell)$ such that a.a.s.
$$ \ex\left(G_{n,p}, C_{2\ell}\right) \leq \begin{cases} n^{1+1/(2\ell-1)} (\log{n})^{2},  &   \mbox{if } p \leq n^{-(\ell-1)/(2\ell-1)}(\log{n})^{2\ell} ,\\
Cp^{1/\ell}n^{1+1/\ell},& \mbox{otherwise.} \end{cases}$$

\end{theorem}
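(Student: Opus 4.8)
The plan is to derive both bounds from the hypergraph container method (following Morris--Saxton for the main regime), the one substantial input being a \emph{balanced supersaturation} theorem for even cycles; below the Haxell--Kohayakawa--\L{}uczak threshold the first bound is anyway immediate, since there $\ex(G_{n,p},C_{2\ell})\le e(G_{n,p})=O(pn^2)\le n^{1+1/(2\ell-1)}$ a.a.s., and in the slightly super-threshold range one may instead invoke the Kohayakawa--Kreuter--Steger second-moment estimate.

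\emph{Setup and the key lemma.} Identify graphs on $[n]$ with subsets of $E(K_n)$ and let $\mathcal{G}$ be the $2\ell$-uniform hypergraph on vertex set $E(K_n)$ whose edges are the edge-sets of the copies of $C_{2\ell}$ in $K_n$; a graph is $C_{2\ell}$-free exactly when it is independent in $\mathcal{G}$. To feed this into the container lemma of Balogh--Morris--Samotij and Saxton--Thomason one needs, inside every not-too-sparse host $G\subseteq K_n$, a family of copies of $C_{2\ell}$ that is at once large and of small codegree. Concretely, the goal is: there are $C=C(\ell)$ and $\delta=\delta(\ell)>0$ such that if $G$ has $n$ vertices and $m\ge Cn^{1+1/\ell}$ edges then $G$ contains a family $\mathcal{H}=\mathcal{H}(G)$ of copies of $C_{2\ell}$ with $|\mathcal{H}|\ge \delta(m/n)^{2\ell}/\mathrm{polylog}(n)$ and $d_{\mathcal{H}}(\sigma)\le \tau^{|\sigma|-1}\cdot 2\ell\,|\mathcal{H}|/m$ for every nonempty $\sigma\subseteq E(G)$ with $|\sigma|\le 2\ell$, where $\tau=\tau(m)$ has order $n^{1+1/(2\ell-1)}/m$ up to polylogarithmic factors. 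I expect this lemma to carry essentially all of the difficulty; see the final paragraph.

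\emph{Containers and the union bound.} Granting the lemma one runs the standard iteration: starting from $G=K_n$ and applying the container lemma with $\mathcal{H}(G)$, each step passes to a subcontainer holding only a fixed fraction as many copies of $C_{2\ell}$ while recording a ``fingerprint'' of size $O(\tau m)$; after $O(\log n)$ steps the copy count has fallen far enough that ordinary supersaturation forces every container to have $O(n^{1+1/\ell})$ edges. This produces a family $\mathcal{C}$ of subgraphs of $K_n$ such that every $C_{2\ell}$-free graph on $[n]$ is a subgraph of some $C\in\mathcal{C}$, each $C$ has $e(C)=O(n^{1+1/\ell})$, and --- because $\tau m$ is, up to polylogs, the same $O(n^{1+1/(2\ell-1)})$ at every density while there are only $O(\log n)$ steps --- $\log|\mathcal{C}|=O(n^{1+1/(2\ell-1)}(\log n)^2)$. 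Now every $C_{2\ell}$-free $H\subseteq G_{n,p}$ satisfies $e(H)\le|E(C)\cap E(G_{n,p})|$ for some $C\in\mathcal{C}$, and $|E(C)\cap E(G_{n,p})|\sim\mathrm{Bin}(e(C),p)$ has mean at most $p\cdot O(n^{1+1/\ell})$; it therefore suffices to show that a.a.s.\ no container exceeds the target $B$, which I do by a union bound over $\mathcal{C}$ together with $\Prob[\mathrm{Bin}(N,p)\ge a]\le(epN/a)^a$. For $p>n^{-(\ell-1)/(2\ell-1)}(\log n)^{2\ell}$, the choice $B=Cp^{1/\ell}n^{1+1/\ell}$ with $C=C(\ell)$ large exceeds the mean by a factor of order $p^{-(\ell-1)/\ell}$, so each tail is at most $(O(p^{(\ell-1)/\ell}))^{B}$, and there $B\log(1/p)$ dominates $\log|\mathcal{C}|$, closing the union bound (when $p$ is of constant order one instead uses that the mean is $O(n^{1+1/\ell})$ and the fluctuations are of lower order). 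For $p\le n^{-(\ell-1)/(2\ell-1)}(\log n)^{2\ell}$, a short exponent computation --- exactly where the inequality $(\ell-1)^2>0$ is used --- shows the mean $p\cdot O(n^{1+1/\ell})$ is already $\ll n^{1+1/(2\ell-1)}(\log n)^2$, so the same argument runs with $B=n^{1+1/(2\ell-1)}(\log n)^2$.

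\emph{Main obstacle.} The whole weight of the proof falls on the balanced supersaturation lemma. Ordinary counting already gives $\Theta((m/n)^{2\ell})$ copies of $C_{2\ell}$, but with no control on codegrees, and one cannot simply let $\mathcal{H}(G)$ consist of all of them: a few edges lying in a locally dense part of $G$ can belong to far more than the average number of cycles, which would inflate $\tau$ and hence $\log|\mathcal{C}|$. One must instead assemble the cycles from short paths chosen by a weighted dependent-random-choice argument that steers away from the high-codegree pairs, and then check the codegree bound for all $\sigma$ with $1\le|\sigma|\le 2\ell$ simultaneously --- which entails a case analysis over the possible overlap patterns of two cycles of the family. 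This is exactly the mechanism the present paper reexamines for the linear hypergraph cycle $C_{2\ell}^{(r)}$, the new twist being to let the codegree bound itself depend on codegrees of pairs of vertices of the underlying $r$-graph.
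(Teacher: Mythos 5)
This is Theorem~\ref{thm:graphcycles}, which the paper cites from~\cite{HKL, KKS, MS} without reproducing a proof; the closest thing to a proof in the paper is its own hypergraph extension (Theorems~\ref{thm:onestep}--\ref{thm:main}), which follows the same container template and therefore serves as a useful reference point.

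Your high-level plan is the right one and matches Morris--Saxton: encode $C_{2\ell}$-free graphs as independent sets in the $2\ell$-uniform hypergraph of cycle copies, feed a balanced supersaturation theorem into the hypergraph container lemma, iterate, then do a binomial union bound over the resulting containers against $G_{n,p}$. The union-bound arithmetic you outline for the supercritical regime, and the remark that $(\ell-1)^2>0$ is exactly what makes the mean at the crossover fall below the target, are both correct.

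However, the balanced supersaturation lemma you posit is too strong and is in fact false. You require, for a graph $G$ with $m$ edges, a cycle family $\mathcal{H}$ with $\Delta_j(\mathcal{H})\le \tau^{\,j-1}\,d(\mathcal{H})$ for \emph{all} $2\le j\le 2\ell$ with $\tau\approx n^{1+1/(2\ell-1)}/m$, i.e.\ $\tau m\approx n^{1+1/(2\ell-1)}$ uniformly in the density. But from the genuine (and, as the paper remarks, tight) bounds in Theorem~\ref{thm:morrissaxton}, writing $k=m/n^{1+1/\ell}$ one finds $\Delta_2/d=\Theta\bigl(k^{-\ell/(\ell-1)}\bigr)$, so the $j=2$ condition forces $\tau\gtrsim k^{-\ell/(\ell-1)}$ and hence $\tau m\gtrsim k^{-1/(\ell-1)}n^{1+1/\ell}$. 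This equals $n^{1+1/(2\ell-1)}$ only when $k\approx n^{(\ell-1)^2/(\ell(2\ell-1))}$, i.e.\ $m\approx n^{1+\ell/(2\ell-1)}$; for sparser $G$ the required $\tau m$ is strictly larger. Consequently the container family you describe --- each container with $O(n^{1+1/\ell})$ edges \emph{and} $\log|\mathcal{C}|=O(n^{1+1/(2\ell-1)}(\log n)^2)$ --- does not exist; pushing the iteration all the way down to constant $K$ blows the log-count up to order $n^{1+1/\ell}\log n$. The correct proof keeps the stopping density as a free parameter $K$, obtaining for each admissible $K$ a family of containers of size at most $Kn^{1+1/\ell}$ with a log-count that grows as $K$ shrinks, and then chooses $K=K(p)$ in the union bound. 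This tuning is exactly what the paper does in its hypergraph analogue --- see the choice of $K_1$ as a function of $p$ in the proof of Theorem~\ref{thm:main}, and the explicit $K^{-1/(2\ell-2)}$ dependence of the container count in Theorem~\ref{thm:count} --- and it is the one nontrivial piece of bookkeeping that your sketch elides.
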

In Theorem~\ref{thm:graphcycles}, the first bound is sharp up to a polylog factor by the results of \cite{KKS}. As for the second bound, an old conjecture of Erd\H{o}s and Simonovits says that  there is a graph of girth at least $2\ell+1$ and  $\Omega(n^{1+1/\ell})$ edges. If this conjecture is true, then the second bound is also  sharp up to the value of the constant $C$ (see the discussion after Conjecture 2.3 in~\cite{MS}).

The problem of finding the largest $H$-free subgraph of $G_{n,p}^{(r)}$ is closely related to the problem of determining $|Forb(n,H)|$, the number of $H$-free subgraphs on $n$ labelled vertices.   There is a well-developed theory for the latter problem for  $r$-graphs that are not $r$-partite~\cite{EFR, NRS, NR}. 
The corresponding question for $r$-partite $r$-graphs was initiated in a recent paper of the first author and Wang~\cite{MW}. The $r$-graph $C_k^{(r)}$ is obtained from the graph $k$-cycle $C_k$  by adding $r-2$ new vertices of degree one to each graph edge (thus enlarging each graph edge to an $r$-graph edge).
The authors in~\cite{MW} determined the asymptotics  of $|Forb(n, C_{k}^{(r)})|$ for even $k$ and $r=3$, and conjectured that similar results hold for all $k,r\ge 3$. This was later confirmed by Balogh, Narayanan and Skokan~\cite{BNS}.   Soon after, Ferber, McKinley and Samotij~\cite{FMS} proved similar results for a  much larger class of $r$-graphs that includes linear cycles and linear paths. However, the results of~\cite{BNS, FMS} both rely on supersaturation theorems that are not strong enough to imply  anything nontrivial for $\ex(G_{n,p}^{(r)}, C_k^{(r)})$ when $k>3$ and $p=o(1)$. 

In this paper, we prove a stronger supersaturation result that can be used  to compute $\ex(G_{n,p}^{(r)}, C_k^{(r)})$ for a large range of $p$ when $k$ is even. The bounds in our supersaturation result  depend on  the codegree of some pairs of vertices in the underlying hypergraph. We expect that this approach can be applied to  $r$-partite $r$-graphs other than even cycles.

\section{Main result}

 Our main result is the following extension of 
Theorem~\ref{thm:graphcycles} to linear even cycles $C_{2\ell}^{(r)}$.

\begin{theorem}\label{thm:ourresult}For every $\ell\geq 2$ and $r\geq 3$ a.a.s. the following holds: 
$$ \ex\left(G_{n,p}^{(r)}, C_{2\ell}^{(r)}\right) \leq \begin{cases}  p^{\frac{1}{(2\ell-1)}}n^{1+\frac{r-1}{2\ell-1}+o(1)}, & \mbox{if } {n^{-(r-2)+o(1)}\leq p\leq n^{-(r-2)+\frac{1}{2\ell-2}+o(1)}} \\ 
pn^{r-1+o(1)},& \mbox{otherwise.} \end{cases}$$
All $o(1)$ error terms in the exponents are  $O(\log\log n /\log n)$.
\end{theorem}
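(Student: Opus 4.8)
The plan is to combine the method of hypergraph containers with a new balanced supersaturation theorem for $C_{2\ell}^{(r)}$ whose strength is calibrated not by the number of edges of the host $r$-graph alone but by the distribution of codegrees of pairs of its vertices.

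\textbf{Step 1: balanced supersaturation through codegrees.} For an $n$-vertex $r$-graph $G$, the guiding observation is that a (homomorphic) copy of $C_{2\ell}^{(r)}$ amounts to a cyclic sequence of ``special'' vertices $v_1,\dots,v_{2\ell}$ together with, for each $i$, a choice of an edge of $G$ containing $\{v_i,v_{i+1}\}$; hence, up to degenerate configurations, copies of $C_{2\ell}^{(r)}$ in $G$ correspond to closed walks of length $2\ell$ in the weighted graph $W=W(G)$ on $V(G)$ with $w(uv)=d_G(u,v)$, and the number of copies through any prescribed set of edges of $G$ is governed by products of codegrees along short walks in $W$. I would therefore establish a weighted analogue of the Morris--Saxton supersaturation for $C_{2\ell}$: if $W$ carries total weight $m=\Theta_r(e(G))$ above the relevant threshold, then there is a probability measure $\mu$ on the copies of $C_{2\ell}$ in $W$ (each weighted by the product of the codegrees on its edges) such that for all $1\le j\le 2\ell$ and every set $\sigma$ of $j$ edges of $W$, the $\mu$-mass of copies containing $\sigma$ is at most $\Phi(W)\cdot\big(\theta_\ell(n)/m\big)^{j-1}$, where $\theta_\ell(n)=n^{1+\Theta(1/\ell)}$ is the appropriate threshold and $\Phi(W)$ quantifies how much of the weight of $W$ sits on pairs of large codegree. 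I would obtain this by running the Morris--Saxton scheme in the weighted setting --- pass to an almost weight-regular subgraph, use a breadth-first exploration to exhibit many cycles (the weighted Bondy--Simonovits count), and bound multiplicities by convexity --- while, at the outset, splitting off the pairs of abnormally large codegree, which behave like bundles of parallel edges and would otherwise wreck the balancedness. Transferring the conclusion from $W$ back to $G$ then gives a balanced supersaturation for $C_{2\ell}^{(r)}$: an $n$-vertex $r$-graph with $e(G)$ edges and suitably bounded codegrees contains a collection of copies of $C_{2\ell}^{(r)}$ that is spread out with respect to the edges of $G$, with count and degeneracies controlled as above.

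\textbf{Step 2: containers and the union bound.} Form the $2\ell$-uniform hypergraph $\mathcal H$ on vertex set $E(K_n^{(r)})$ whose edges are the edge sets of copies of $C_{2\ell}^{(r)}$; Step~1 supplies exactly the degree bounds $\Delta_j(\mathcal H)$ needed to apply the hypergraph container theorem. Iterating it yields, for any choice of a stopping parameter $b$, a family $\mathcal C=\mathcal C(b)$ of subgraphs of $K_n^{(r)}$ --- each almost $C_{2\ell}^{(r)}$-free and with a controlled number of edges $B=B(b)$ that decreases as $b$ increases --- such that every $C_{2\ell}^{(r)}$-free $r$-graph on $[n]$ lies inside some member of $\mathcal C$, and $\log|\mathcal C|\le b\cdot n^{o(1)}$. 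Fixing $C\in\mathcal C$, the number of edges of $G_{n,p}^{(r)}$ inside $C$ is distributed as $\mathrm{Bin}(e(C),p)$, so $\Prob\big(e(G_{n,p}^{(r)}\cap C)>x\big)\le\big(e\,e(C)\,p/x\big)^x$; a union bound over $\mathcal C$ then shows that a.a.s.\ every $C_{2\ell}^{(r)}$-free $F\subseteq G_{n,p}^{(r)}$ --- which is contained in some $C\in\mathcal C$ --- satisfies $e(F)\le x$ as soon as $x\log\!\big(x/(e(C)p)\big)$ dominates $\log|\mathcal C|$. Optimising $b$ against this estimate, and tracking exponents through the (known) linear Tur\'an bound $\ex(n,C_{2\ell}^{(r)})=\Theta(n^{r-1})$, produces the two expressions in the statement: in the displayed window for $p$ the bound is driven by the number of containers and comes out as $p^{1/(2\ell-1)}n^{1+(r-1)/(2\ell-1)+o(1)}$, while once $p\ge n^{-(r-2)+1/(2\ell-2)+o(1)}$ this exceeds $pn^{r-1+o(1)}$, which is then the operative bound; the two agree at $p=n^{-(r-2)+1/(2\ell-2)+o(1)}$, and for the sparsest $p$ one falls back on a routine deletion argument.

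\textbf{Where the difficulty lies.} The heart of the matter is Step~1, and within it the pairs of large codegree. A handful of pairs $\{u,v\}$ with $d_G(u,v)$ as large as $n^{r-2}$ can carry a constant fraction of the total codegree weight and can lie in astronomically many copies of $C_{2\ell}^{(r)}$ --- far more than any edge-balanced family could tolerate were those pairs ordinary edges --- so a supersaturation statement phrased purely in terms of $e(G)$ is genuinely false, and one is forced to separate the heavy and light parts of $W$, show that the light part still supports enough well-distributed cycles, and handle the heavy part by a rigidity argument in the spirit of the proof that $\ex(n,C_{2\ell}^{(r)})=\Theta(n^{r-1})$; the quantitative trade-off between these two parts is precisely what fixes the threshold $p=n^{-(r-2)+1/(2\ell-2)+o(1)}$. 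A secondary nuisance is the passage between copies of $C_{2\ell}^{(r)}$ in $G$ and weighted closed walks in $W$: one must discard the degenerate walks, forbid reuse of the $r-2$ ``ordinary'' vertices across distinct edges of a cycle, and convert ``$\mu$-spread with respect to edges of $W$'' into ``spread with respect to edges of $G$'', bearing in mind that each hyperedge of $G$ is responsible for $\binom r2$ edges of $W$.
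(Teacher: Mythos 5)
You have the correct overall architecture: a codegree-sensitive balanced supersaturation theorem feeding into the Morris--Saxton hypergraph container machinery, iterated down to a chosen density, followed by a union bound over containers against $G_{n,p}^{(r)}$. You have also correctly identified the central difficulty, namely that supersaturation for $C_{2\ell}^{(r)}$ cannot be calibrated by $e(G)$ alone but must track pairwise codegrees.

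Where you diverge from the paper is in the route to the key supersaturation lemma, and the paper's route is considerably lighter than what you propose. You plan to build the weighted codegree graph $W(G)$ on $V(G)$ and then establish a \emph{weighted} analogue of Morris--Saxton from scratch (weight-regularization, BFS exploration, a weighted Bondy--Simonovits count, convexity bounds for multiplicities, plus a separate ``rigidity'' argument to deal with pairs of codegree close to $n^{r-2}$). The paper never needs a weighted cycle-supersaturation theorem. Instead it first passes to an $r$-partite subgraph $H$ (Erd\H{o}s--Kleitman), then dyadically pigeonholes \emph{all} the pair-codegrees $d_H(v_i,v_j)$, $1\le i<j\le r$, simultaneously, to find a subgraph $H'$ retaining a $1/(\log n)^{O_r(1)}$ fraction of the edges in which every pair lying in an edge has codegree within a factor of two of a fixed $\Delta_{ij}$. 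After this regularization, the bipartite shadow $\partial_{12}$ between two parts is an ordinary unweighted graph, and the vanilla Morris--Saxton supersaturation theorem for $C_{2\ell}$ applies to it as a black box; each resulting $2\ell$-cycle in $\partial_{12}$ lifts to $\asymp \Delta_{12}^{2\ell}$ linear cycles $C_{2\ell}^{(r)}$ in $H'$, and this multiplicative factor is exactly what carries through the codegree-function bound needed for the container theorem. Your ``heavy codegree pairs'' concern is also resolved by the same regularization: the paper's Claim 5.3 shows that if $\Delta_{12}$ is so large that $|\partial_{12}|$ is small, then $|\partial_{13}|$ is automatically large, so one simply switches the distinguished pair of parts rather than separating ``heavy'' from ``light'' by hand. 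In short, the regularization collapses the weighted problem into an unweighted one, so the weighted Morris--Saxton you intend to develop is avoidable; if you did carry out your plan, the weight-regularization step you yourself propose would bring you to essentially the same place, but at the cost of re-deriving the Bondy--Simonovits and balanced-multiplicity machinery in a weighted setting.
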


The upper bounds in Theorem~\ref{thm:ourresult} together with monotonicity quickly give us the order of magnitude of $\ex(G_{n,p}^{(r)}, C_{2\ell}^{(r)})$ apart from polylog factors for all but a small range of $p$.


\begin{corollary}\label{thm:ourcor}For every $\ell\geq 2$ and $r\geq 3$ a.a.s. the following holds: 
$$ \ex\left(G_{n,p}^{(r)}, C_{2\ell}^{(r)}\right) = \begin{cases} 
\Theta(pn^r),  & \mbox{if } {n^{-r} \ll p\ll n^{-(r-1)+\frac{1}{2\ell-1}}} \\ 
n^{1+\frac{1}{2\ell-1}+o(1)}, & \mbox{if } {n^{-(r-1)+\frac{1}{2\ell-1}+o(1)}\leq p\leq n^{-(r-2)+o(1)}} \\ 
pn^{r-1+o(1)}, & \mbox{if } { p \geq n^{-(r-2)+\frac{1}{2\ell-2}+o(1)}}. \end{cases}$$
For $n^{-(r-2)+o(1)} \leq p \leq n^{-(r-2)+\frac{1}{2\ell -2}}$, 
$$\max\{n^{1+\frac{1}{2\ell-1}+o(1)}, \, pn^{r-1+o(1)}\} \leq \ex(G_{n,p}^{(r)}, C_{2\ell}^{(r)}) \leq p^{\frac{1}{(2\ell-1)}}n^{1+\frac{r-1}{2\ell-1}+o(1)}.$$
\end{corollary}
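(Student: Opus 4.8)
The plan is to match, in each range of $p$, an upper bound read off from Theorem~\ref{thm:ourresult} (augmented by the trivial estimate $\ex(G_{n,p}^{(r)},C_{2\ell}^{(r)})\le e(G_{n,p}^{(r)})$) with a lower bound coming from one of two elementary constructions. The structural tool throughout is monotonicity: the coupling $G_{n,p'}^{(r)}\subseteq G_{n,p}^{(r)}$ for $p'\le p$ shows that $\ex(G_{n,p}^{(r)},H)$ is non-decreasing in $p$, so an a.a.s.\ upper bound at some $p_0$ transfers to all $p\le p_0$, and an a.a.s.\ lower bound at some $p_0$ transfers to all $p\ge p_0$.

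For the upper bounds: a Chernoff bound gives $e(G_{n,p}^{(r)})=(1+o(1))p\binom nr$ a.a.s.\ whenever $pn^r\to\infty$, which yields the $O(pn^r)$ bound in the first range. For the second range I would apply Theorem~\ref{thm:ourresult} at $p_0=n^{-(r-2)+o(1)}$, the left endpoint of its first case: the identity $p_0^{1/(2\ell-1)}n^{1+(r-1)/(2\ell-1)+o(1)}=n^{1+1/(2\ell-1)+o(1)}$ together with monotonicity gives $\ex(G_{n,p}^{(r)},C_{2\ell}^{(r)})\le n^{1+1/(2\ell-1)+o(1)}$ for every $p\le n^{-(r-2)+o(1)}$. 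The upper bounds in the third range and on the leftover interval $[n^{-(r-2)+o(1)},n^{-(r-2)+1/(2\ell-2)}]$ are exactly the two cases of Theorem~\ref{thm:ourresult}.

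For the lower bounds I would use two constructions. (i) \emph{Alteration.} Since $C_{2\ell}^{(r)}$ has $2\ell(r-1)$ vertices and $2\ell$ edges, the expected number of its copies in $G_{n,p}^{(r)}$ is $\Theta_{\ell,r}(p^{2\ell}n^{2\ell(r-1)})$, and the ratio of this quantity to $pn^r$ equals $(p\,n^{(r-1)-1/(2\ell-1)})^{2\ell-1}$, which tends to $0$ exactly when $p\ll n^{-(r-1)+1/(2\ell-1)}$. Hence for such $p$, Markov's inequality together with deleting one edge per copy leaves an a.a.s.\ $C_{2\ell}^{(r)}$-free subgraph with $(1-o(1))e(G_{n,p}^{(r)})=\Omega(pn^r)$ edges, so $\ex(G_{n,p}^{(r)},C_{2\ell}^{(r)})=\Theta(pn^r)$ in the first range; running the same argument at $p_n=n^{-(r-1)+1/(2\ell-1)}(\log n)^{-K}$ for a suitable constant $K$ --- where the ratio above is still $o(1)$, with polylogarithmic room --- produces an a.a.s.\ $C_{2\ell}^{(r)}$-free subgraph with $\Omega(p_nn^r)=n^{1+1/(2\ell-1)-o(1)}$ edges, a bound that propagates by monotonicity to all $p\ge p_n$, covering the second range and the leftover interval. (ii) \emph{A fixed cover.} Fix $S\subseteq[n]$ with $|S|=\ell-1$ and let $G_0$ be the $r$-graph of all $r$-sets meeting $S$; since every vertex lies in at most two edges of a linear cycle, $S$ can meet at most $2\ell-2<2\ell$ edges of a copy of $C_{2\ell}^{(r)}$, so $G_0$ is $C_{2\ell}^{(r)}$-free, while $e(G_0)\ge\binom{n-1}{r-1}=\Omega(n^{r-1})$. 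A Chernoff bound shows that a.a.s.\ $G_0\cap G_{n,p}^{(r)}$ has $\Omega(pn^{r-1})$ edges whenever $pn^{r-1}\to\infty$ (which holds in every range under consideration, since there $p\gg n^{-(r-1)}$), giving $\ex(G_{n,p}^{(r)},C_{2\ell}^{(r)})=\Omega(pn^{r-1})\ge pn^{r-1-o(1)}$. Matching (i) and (ii) against the upper bounds yields the three claimed asymptotics and the two-sided bound on the leftover interval.

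The one genuinely delicate point is the lower bound near the threshold $p\approx n^{-(r-1)+1/(2\ell-1)}$: there the expected number of copies of $C_{2\ell}^{(r)}$ is of the same order as $e(G_{n,p}^{(r)})$, so naive deletion would remove essentially all edges, and the remedy is to run the alteration at a $p$ a polylogarithmic factor below the threshold and then transport the bound upward via monotonicity. The rest is bookkeeping --- in particular checking that the exponents produced in the various cases agree at the common endpoints of the intervals.
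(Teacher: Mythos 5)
Your proposal is correct and follows essentially the same route as the paper: deletion/alteration below the threshold $n^{-(r-1)+1/(2\ell-1)}$ for the first-range lower bound (this is the paper's Proposition~\ref{prop:lowerbound1}); a fixed degenerate construction intersected with $G_{n,p}^{(r)}$ for the $\Omega(pn^{r-1})$ lower bound (the paper's Proposition~\ref{prop:lowerbound2} uses the star at a single vertex, while you take all $r$-sets meeting an $(\ell-1)$-set $S$ — a harmless generalization, since both give $\Theta(n^{r-1})$ edges, and your verification that $S$ meets at most $2\ell-2<2\ell$ edges of any linear $2\ell$-cycle is valid); and two-sided monotonicity via the natural coupling to transfer the alteration lower bound upward and the Theorem~\ref{thm:ourresult} upper bound at $p\approx n^{-(r-2)}$ downward across the middle range. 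Your explicit note about the delicate point near the threshold — running the alteration a polylogarithmic factor below $n^{-(r-1)+1/(2\ell-1)}$ and then pushing up by monotonicity — is exactly the paper's choice of $p_0=n^{-(r-1)+1/(2\ell-1)+o(1)}$, and the endpoint-matching identity you verify is the same computation the paper performs.
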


\begin{proof} When $n^{-r}\ll p \ll {n^{-(r-1)+\frac{1}{2\ell-1}}}$ a.a.s. $G_{n,p}^{(r)}$ has a $C_{2\ell}^{(r)}$-free subgraph with $(1+o(1))e(G_{n,p}^{(r)})$  edges by a simple deletion argument (see Proposition~\ref{prop:lowerbound1} for details), and this is  best possible, therefore in this regime  $\ex(G_{n,p}^{(r)}, C_{2\ell}^{(r)}) =\Theta(pn^r)$. For $p\gg n^{-(r-1)}$, $G_{n,p}^{(r)}$ a.a.s contains $\Omega(pn^{r-1})$ edges  containing a fixed vertex (see Proposition~\ref{prop:lowerbound2} for the proof).  This together with the second bound in Theorem~\ref{thm:ourresult} determines the optimal behaviour of  $\ex(G_{n,p}^{(r)}, C_{2\ell}^{(r)})$ up to $n^{o(1)}$ factors for $p\geq n^{-(r-2)+\frac{1}{2\ell-2}+o(1)}$, that is, in this range  $\ex(G_{n,p}^{(r)}, C_{2\ell}^{(r)})=  pn^{r-1+o(1)}$. To understand the behaviour of $\ex(G_{n,p}^{(r)}, C_{2\ell}^{(r)})$  when $n^{-(r-1)+\frac{1}{2\ell-1}+o(1)}\leq p \leq n^{-(r-2)+\frac{1}{2\ell-2}+o(1)}$   we need to combine our first upper bound in Theorem~\ref{thm:ourresult} together with monotonicity of the  $H$-freeness property. 

\begin{figure}  \centering
    \includegraphics[width=1.1\textwidth]{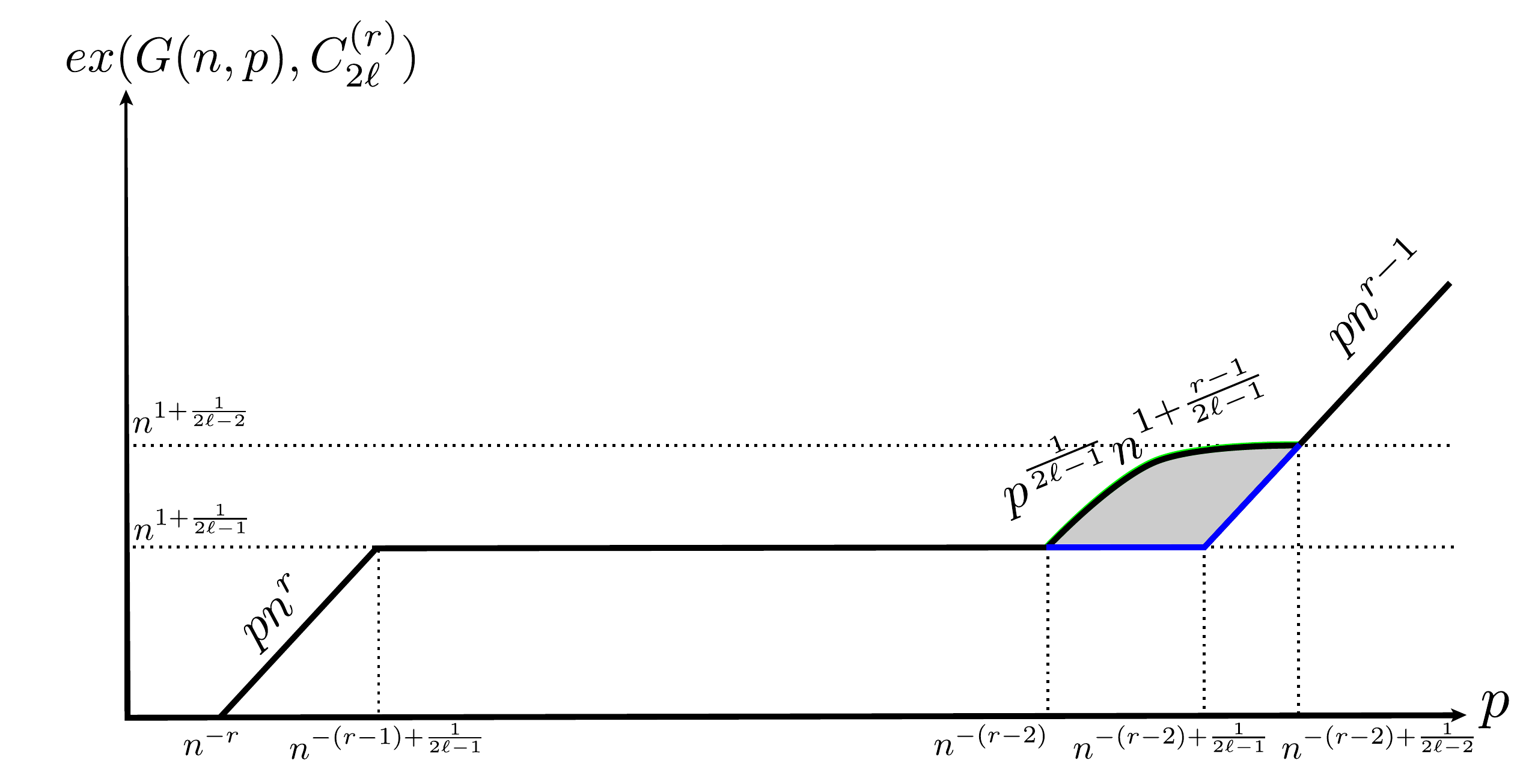}
  \caption{The behaviour of $\ex(G_{n,p}^{(r)}, C_{2\ell}^{(r)})$}
  \label{fig:general}
\end{figure}

Indeed, the property ``$G_{n,p}^{(r)}$ has an $H$-free subgraph with $m$ edges'' is monotone with respect to $p$,  that is if $q>p$, then if a.a.s. $G_{n,p}^{(r)}$ has $H$-free subgraph with $m$ edges then so does $G_{n,q}^{(r)}$. Therefore, if we let $p_0=n^{-(r-1)+\frac{1}{2\ell-1}+o(1)}$ then by  our earlier discussion we know that 
$$\ex(G_{n,p_0}^{(r)}, C_{2\ell}^{(r)}) = p_0n^{r}= n^{1+\frac{1}{2\ell-1}+o(1)}.$$
If we let $p_1= n^{-(r-2)+o(1)}$, then $\ex(G_{n,p_1}^{(r)},C_{2\ell}^{(r)})\leq n^{1+\frac{1}{2\ell-1}+o(1)}$ by the first upper bound in Theorem~\ref{thm:ourresult}. Thus by monotonicity, $\ex(G_{n,p}^{(r)}, C_{2\ell}^{(r)})= n^{1+\frac{1}{2\ell-1}+o(1)}$ for $  p\in[ p_0, p_1]$.
Note that here the choice of $p_0$ was not optimal, in particular we can get the lower bound to be  $n^{1+\frac{1}{2\ell-1}}/\omega(n)$ for any slowly growing function $\omega(n)$.

Finally, the range  of $p$ for which  our upper and lower bounds do not match up to polylog factors is $n^{-(r-2)+o(1)} \leq p \leq n^{-(r-2)+\frac{1}{2\ell-2}+o(1)} $ (the grey region in Figure 1, where the green curve is our proved upper bound, and the blue lines correspond to existing lower bounds). By monotonicity for this range of $p$ we still have $\ex(G_{n,p}^{(r)}, C_{2\ell}^{(r)})\geq n^{1+\frac{1}{2\ell-1}+o(1)}$ however if $p>  n^{-(r-2)+\frac{1}{2\ell-1}+o(1)}$ then  Proposition~\ref{prop:lowerbound2} and monotonicity imply a better lower bound $\ex(G_{n,p}^{(r)}, C_{2\ell}^{(r)}) = \Omega(pn^{r-1})$. 
\end{proof}

Note that as $\ell\rightarrow \infty$, the grey region in Figure 1  diminishes as the blue and the green lines converge to the same limit.  However,  it remains an open problem to determine the correct order of magnitude of $\ex(G_{n,p}^{(r)}, C_{2\ell}^{(r)})$ in this range of $p$ (up to polylog factors). We conjecture that the correct behaviour should be given by the lower bounds when $r=3, \ell=2$. This is explained in the final section.

Very recently Nie, Spiro and Verstra\"{e}te~\cite{jacque}  proved upper and lower bounds for $\ex(G_{n,p}^{(3)}, C_3^{(3)})$. That problem exhibits somewhat different behaviour, due to the existence of Behrend type constructions without loose triangles.
The question for odd linear cycles $C_{2\ell+1}^{(r)}$ for $\ell\geq 2$ or $\ell=1$ and $r\geq 4$ remains wide open. We believe that the behaviour for odd cycles is very different compared to our results.
\end{section}
\begin{section}{Notation}
For an $r$-graph $G$, we write $e(G)$ for its number of edges and $d(G)=r\cdot e(G)/|V(G)|$ for its average degree. If $G' \subset G$, the $r$-graph $G-G'$ has vertex set $V(G)$ and edge set $E(G)\setminus E(G')$. In particular, $e(G-G') = |E(G)\setminus E(G')|$. For an $r$-partite $r$-graph $G$ with vertex partition $(V_1, V_2, \dots, V_r)$, for $1\leq i <j \leq r$ we write $\partial_{V_i,V_j}(G)$ for the pairs $(v_i,v_j)$ with $v_i\in V_i$ and $v_j\in V_j$ such that there exists some $e\in E(G)$ with $\{v_i,v_j\}\subseteq e$. For any $r$-graph $G$,  $1\leq j\leq r$ and  any $j$-tuple $\sigma$, the \emph{degree} of $\sigma$, written $d_{G}(\sigma)$ is the number of edges that contain $\sigma$; when $\sigma=\{u,v\}$ we simplify the notation to $d_G(u,v)$. We denote by $\Delta_j(G)$ the maximum $d_G(\sigma)$ among all $j$-tuples $\sigma$.  Whenever $G$ is clear from the context we will drop it from the notation.  Given an $r$-graph $G$ and some $0<\tau<1$, the \emph{co-degree  function} $\delta(G,\tau)$  is 
$$\delta(G,\tau)=\frac{1}{d(G)}\sum_{j=2}^{r}{\frac{\Delta_{j}}{\tau^{j-1}}}.$$

 A vertex subset $I$ is called \emph{independent} in $G$ if there is no edge $e\in E(G)$ such that $e\subseteq I$. For a vertex subset $A\subseteq V(G)$ we define $G[A]$, \emph{the subgraph induced by} $A$, to be the $r$-graph with vertex set $A$ and edge set comprising all those  $e\in E(G)$ for which $e\subseteq A$. 

\end{section}
\begin{section}{Tools}

\begin{lemma}[\cite{molloy2013graph}, Chernoff bound] Given a binomially distributed variable $X\in Bin(n, p)$  and  $0<a\leq 3/2$,
 $$\Prob{[|X-\E[X]|\geq a \E[X]]}\leq 2e^{-\frac{a^2}{3}\E[X]}.$$
\end{lemma}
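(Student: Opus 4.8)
The plan is to run the classical moment-generating-function argument and combine the two tails by a union bound; this is the standard Chernoff bound for a binomial, so I include the argument mainly for completeness, following e.g.~the treatment in~\cite{molloy2013graph}. Write $\mu := \E[X] = np$, and note $X = \sum_{i=1}^{n} X_i$ with the $X_i$ independent Bernoulli$(p)$. For the upper tail and any $t>0$, the exponential Markov inequality gives $\Prob[X \geq (1+a)\mu] \leq e^{-t(1+a)\mu}\,\E[e^{tX}]$, and by independence $\E[e^{tX}] = (1-p+pe^{t})^{n} \leq \exp(np(e^{t}-1)) = \exp(\mu(e^{t}-1))$, using $1+x \leq e^{x}$. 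Choosing the minimizing value $t = \ln(1+a)$ yields
$$\Prob[X \geq (1+a)\mu] \leq \exp\!\big(-\mu\,(\,(1+a)\ln(1+a) - a\,)\big).$$

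The next step is the elementary inequality $(1+a)\ln(1+a) - a \geq a^{2}/3$ for all $0 < a \leq 3/2$. Setting $g(a) := (1+a)\ln(1+a) - a - a^{2}/3$, one computes $g(0) = g'(0) = 0$, $g'(a) = \ln(1+a) - 2a/3$, and $g''(a) = \frac{1}{1+a} - \frac{2}{3}$, which is $\geq 0$ for $a \leq 1/2$ and negative thereafter; hence $g'$ increases then decreases on $[0,3/2]$, so (being $0$ at the left end and positive just after) it is nonnegative up to some point and then negative, which makes $g$ first increase from $0$ and then decrease. Since $g(3/2) > 0$, it follows that $g \geq 0$ on all of $[0,3/2]$, giving $\Prob[X \geq (1+a)\mu] \leq e^{-a^{2}\mu/3}$.

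For the lower tail, if $a > 1$ then $(1-a)\mu < 0 \leq X$, so $\Prob[X \leq (1-a)\mu] = 0$; for $0 < a \leq 1$ the same method (optimizing $e^{s(1-a)\mu}\E[e^{-sX}]$ over $s>0$, with the choice $s = -\ln(1-a)$) gives $\Prob[X \leq (1-a)\mu] \leq \exp(-\mu\,((1-a)\ln(1-a)+a))$, and the analogous elementary bound $(1-a)\ln(1-a)+a \geq a^{2}/2$ — again verified from vanishing value and derivative at $a=0$ together with convexity — yields $\Prob[X \leq (1-a)\mu] \leq e^{-a^{2}\mu/2}$. A union bound over the two tails then gives, for all $0 < a \leq 3/2$,
$$\Prob\big[|X-\mu| \geq a\mu\big] \leq \Prob[X \geq (1+a)\mu] + \Prob[X \leq (1-a)\mu] \leq e^{-a^{2}\mu/3} + e^{-a^{2}\mu/2} \leq 2e^{-a^{2}\mu/3},$$
which is the claimed bound.

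There is essentially no serious obstacle here; the only point needing a little care is pinning down the constant $1/3$ in the exponent over the full range $0 < a \leq 3/2$. The function $(1+a)\ln(1+a)-a$ behaves like $a^{2}/2 - a^{3}/6 + \cdots$ near $0$, so the inequality $(1+a)\ln(1+a)-a \geq a^{2}/3$ is tight to leading order at $a=0$, and the value $1/3$ is precisely what lets the estimate survive all the way out to $a = 3/2$; hence that inequality should be established via the monotonicity/convexity analysis above rather than by a crude bound.
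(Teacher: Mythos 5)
The paper does not prove this lemma; it is stated as a cited standard Chernoff bound from \cite{molloy2013graph}, so there is no in-paper argument to compare against. Your moment-generating-function derivation is the standard one and is correct: the inequality $(1+a)\ln(1+a)-a\geq a^2/3$ on $[0,3/2]$ is verified correctly (the minimum of $g$ on $[0,3/2]$ is attained at an endpoint since $g$ increases then decreases, and $g(0)=0$, $g(3/2)>0$), the lower-tail inequality $(1-a)\ln(1-a)+a\geq a^2/2$ on $(0,1]$ follows from convexity with $h(0)=h'(0)=0$, and the union bound finishes. The only pedantic caveats are that at $a=1$ one uses the convention $0\cdot\ln 0=0$ (equivalently $\Prob[X\leq 0]=(1-p)^n\leq e^{-\mu}$), and the statement is vacuous/trivial when $\mu=0$; neither affects the result.
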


\begin{lemma}[\cite{alon1997nearly, kim2002asymmetry}, Azuma's Inequality for 0/1 product spaces]\label{azuma}
Let $\Omega=\{0,1\}^n$ with the $i$th coordinate of an element of $\Omega$ equal to 1 with probability $p_i$. Let $X$ be a $c$-Lipschitz random variable on $\Omega$. Set $\sigma^2=c^2\sum_{i=1}^n p_i(1-p_i)$. For all  $t\leq 2\sigma/c$, we have
$$\Prob\left(|X-\E(X)|>t\sigma \right)\leq 2e^{\frac{-t^2}{4}}$$
\end{lemma}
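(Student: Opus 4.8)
The plan is to prove this by the standard device of exposing the coordinates $\omega_1,\dots,\omega_n$ one at a time and applying an exponential (Bennett/Freedman‑type) bound to the resulting Doob martingale; the crucial point is that the Bernoulli structure of the coordinates lets us control the \emph{variance} of the martingale differences, not merely their range, which is what produces $\sigma^2=c^2\sum p_i(1-p_i)$ rather than the cruder $nc^2$.

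First I would set $X_i=\E[X\mid\omega_1,\dots,\omega_i]$ for $0\le i\le n$, so that $X_0=\E[X]$, $X_n=X$, and put $D_i=X_i-X_{i-1}$. Conditioned on $\omega_1,\dots,\omega_{i-1}$, the random variable $X_i$ is a function of the single bit $\omega_i$, so $D_i$ takes only two values and has conditional mean $0$; and since $X_i(\cdot)=\E_{\omega_{i+1},\dots,\omega_n}[X]$ is an average of $X$ over the remaining coordinates, the $c$‑Lipschitz property forces the gap $\delta$ between these two values to satisfy $\delta\le c$, whence $|D_i|\le c$. A two‑point mean‑zero variable with value‑gap $\delta$, one value carrying probability $p_i$, has variance exactly $p_i(1-p_i)\delta^2\le c^2p_i(1-p_i)=:v_i$. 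Next I would record the elementary inequality that for $s\ge 0$ and $|y|\le c$ one has $e^{sy}\le 1+sy+y^2(e^{sc}-1-sc)/c^2$ (equivalently $y\mapsto (e^{sy}-1-sy)/y^2$ is maximized over $[-c,c]$ at $y=c$, which follows from its power series together with $\sinh(sc)\ge sc$). Taking conditional expectations and using $\E[D_i\mid\omega_1,\dots,\omega_{i-1}]=0$ and $\E[D_i^2\mid\omega_1,\dots,\omega_{i-1}]\le v_i$ yields $\E\bigl[e^{sD_i}\mid\omega_1,\dots,\omega_{i-1}\bigr]\le\exp\!\bigl(p_i(1-p_i)(e^{sc}-1-sc)\bigr)$.

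Multiplying these bounds along the martingale gives $\E\,e^{s(X-\E X)}\le\exp\!\bigl((\sigma^2/c^2)(e^{sc}-1-sc)\bigr)$, and Markov's inequality gives, for every $s\ge 0$, $\Prob(X-\E X>t\sigma)\le\exp\!\bigl(-st\sigma+(\sigma^2/c^2)(e^{sc}-1-sc)\bigr)$. I would then take $s=t/(2\sigma)$: the hypothesis $t\le 2\sigma/c$ is exactly what makes $sc=tc/(2\sigma)\le 1$, and on $[0,1]$ one has $e^u-1-u\le (e-2)u^2$, so the exponent is at most $-t^2/2+(e-2)t^2/4<-t^2/4$. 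Applying the same estimate to $-X$ and taking a union bound supplies the factor $2$ and finishes the proof. The only genuine subtlety is the second step: a generic $c$‑Lipschitz function on an arbitrary product space gives only $|D_i|\le c$ and hence the weaker Azuma bound with $nc^2$; obtaining the improvement to $\sigma^2=c^2\sum p_i(1-p_i)$ uses crucially that each coordinate is Bernoulli$(p_i)$, so that the conditional law of $D_i$ is two‑point with variance degenerating as $p_i\to 0$ or $1$.
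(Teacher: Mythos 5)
Your proof is correct. Note that the paper does not prove Lemma~\ref{azuma} at all---it is stated as a black-box tool cited from Alon--Kim--Spencer and Kim--Sudakov--Vu---so there is no internal argument to compare against; but your Doob-martingale decomposition, with the two-point conditional law of each difference $D_i$ giving conditional variance at most $c^2p_i(1-p_i)$, the Bennett-style bound $e^{sy}\le 1+sy+y^2(e^{sc}-1-sc)/c^2$ (valid since $u\mapsto(e^u-1-u)/u^2$ is nondecreasing), and the choice $s=t/(2\sigma)$ together with $e^u-1-u\le(e-2)u^2$ on $[0,1]$, is precisely the standard argument in those sources and closes correctly to $2e^{-t^2/4}$ after the union bound over $\pm X$.
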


\begin{theorem}[\cite{MS}, Theorem 4.2] \label{thm:MSContainers}For each $r\geq 2$ there exist   $\eps_0=\eps_0(r)$ such that for all $0<\eps <\eps_0$ the following holds. Let $G$ be and $r$-graph with $N$ vertices  and suppose  $\delta(G, \tau)\leq \eps $, for some $0<\tau < 1/2$. Then there exists a collection of $\mathcal{C}\subset  \mathcal{P}(V(G))$ of at most $$exp\left(\frac{\tau N\log{1/\tau}}{\eps}\right)$$  subsets of $V(G)$ (called the containers of $G$) such that
\begin{itemize}
\item [(1)] for each independent set $I\subset V(G)$ there exists a container $C\in \mathcal{C}$ such that  $I\subseteq C$,
\item [(2)] $e(G[C])\leq (1-\eps) e(G)$, for each container $C\in \mathcal{C}$.
\end{itemize}
\end{theorem}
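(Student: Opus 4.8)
This is a standard, user-friendly form of the hypergraph container lemma; the quickest route is to invoke the container lemma of Saxton and Thomason with the parameters below, but let me sketch the argument that lies behind it (a fuller account is in \cite{MS} and the works it builds on). The plan is to produce, for every independent set $I\subseteq V(G)$, a \emph{fingerprint} $S(I)\subseteq I$ with $|S(I)|\le \tau N$ together with a \emph{container} $C(S)\subseteq V(G)$ that is a function of the set $S$ alone (and of $G$ and $\tau$), such that $I\subseteq C(S(I))$ and $e(G[C(S)])\le (1-\eps)e(G)$ for every $S$ that arises as a fingerprint. Granting this, one sets $\mathcal C=\{\,C(S):S\subseteq V(G),\ |S|\le \tau N\,\}$; then part (1) holds because $I\subseteq C(S(I))\in\mathcal C$, part (2) holds by construction, and the number of containers is at most the number of subsets of $V(G)$ of size at most $\tau N$:
$$|\mathcal C|\ \le\ \sum_{i\le \tau N}\binom{N}{i}\ \le\ \left(\frac{e}{\tau}\right)^{\tau N}\ \le\ \exp\!\left(\frac{\tau N\log(1/\tau)}{\eps}\right),$$
where the last inequality uses $\tau<1/2$ (so the bounded factor lost in passing from $\log(e/\tau)$ to $\log(1/\tau)$ is harmless) together with $\eps<\eps_0(r)$, the constant $\eps_0(r)$ being chosen small enough to absorb that factor and to meet the $r$-dependent hypotheses of the underlying lemma.

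To build $S$ and $C$ I would run the Saxton-Thomason container algorithm. Fix once and for all a linear order on $V(G)$. Given $I$, maintain an ``available'' set $A$, initialised to $V(G)$, and a fingerprint $S$, initialised to $\emptyset$, and scan the vertices of $V(G)$ in this order; whenever the current vertex $v$ lies in $A\cap I$ and is ``heavy''---meaning that for some $2\le j\le r$ a $j$-tuple through $v$ has degree, in the hypergraph presently induced on $A$, exceeding the threshold attached to $j$---move $v$ from $A$ into $S$ and delete from $A$ every vertex $u$ for which $\{u,v\}$ together with some already-chosen members of $S$ spans more than $\Delta_j/\tau^{\,j-1}$ edges of $G$. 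Since $I$ is independent, a vertex of $I$ is never ``over-heavy'' relative to vertices of $S\subseteq I$ in the relevant sense, so no vertex of $I$ is ever deleted and the invariant $I\subseteq A\cup S$ is preserved; at termination set $C(S)=A\cup S$. Crucially $C(S)$ depends on $S$ alone: replaying the scan and performing the deletion step exactly at the vertices of $S$---in the fixed order, each deletion depending only on $G$, $\tau$, and the part of $S$ already seen---reconstructs the same set $A$ without any reference to $I$.

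The remaining point, and the one I expect to be the real obstacle, is the quantitative heart of the method: one must choose the degree rule and the sequence of thresholds so that the algorithm stops with \emph{both} $|S|\le \tau N$ and $e(G[A])\le (1-\eps)e(G)$, two requirements that pull against each other, since aggressive fingerprinting sparsifies $A$ quickly but inflates $S$. This is exactly what the hypothesis $\delta(G,\tau)\le\eps$ is designed for: a charging argument whose key input is the inequality $\sum_{j=2}^{r}\Delta_j/\tau^{\,j-1}=d(G)\,\delta(G,\tau)\le \eps\, d(G)$ bounds the number of vertices discarded from $A$ per unit of ``degree mass'' processed, and this simultaneously caps the number of rounds at $\tau N$ and forces the terminal $G[A]$---in which no heavy configuration survives---to carry at most $(1-\eps)e(G)$ edges, possibly after a short clean-up of low-degree vertices. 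Implementing this balance carefully is the technical core of the container method, and for the full details I would refer to \cite{MS} and the references therein.
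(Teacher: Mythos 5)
This theorem is imported into the paper verbatim as \cite{MS}, Theorem 4.2 (itself a reworking of the Saxton--Thomason and Balogh--Morris--Samotij container lemmas); the paper supplies no proof of its own, so there is nothing internal to compare your argument against. Your sketch does capture the correct conceptual architecture: encode each independent set $I$ by a small fingerprint $S(I)\subseteq I$; show the container $C(S)$ is reconstructible from $S$ alone by deterministically replaying the algorithm; and bound $|\mathcal C|$ by the number of possible fingerprints, which gives $\sum_{i\le\tau N}\binom{N}{i}\le(e/\tau)^{\tau N}\le\exp(\tau N\log(1/\tau)/\varepsilon)$ once $\varepsilon$ is small (using $\tau<1/2$ so that $1+\log(1/\tau)\le c\log(1/\tau)$ for an absolute $c$, absorbed by taking $\varepsilon_0<1/c$). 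That part is fine.

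However, as you yourself flag, the argument is not a proof of the theorem, and the gap is exactly where the theorem's content lives. The assertion that one can choose the heaviness rule and deletion thresholds so that the algorithm terminates with \emph{both} $|S|\le\tau N$ \emph{and} $e(G[A\cup S])\le(1-\varepsilon)e(G)$, under only the hypothesis $\delta(G,\tau)\le\varepsilon$, is the entire substance of Theorem~\ref{thm:MSContainers}; it is not a routine ``balance carefully'' step but requires the charging/degree-measure argument (the proof in \cite{MS} runs to several pages and passes through an auxiliary online container lemma). Two further points that your sketch glosses over: (a) the invariant ``$I\subseteq A\cup S$'' is preserved only if the deletion rule is crafted so that deleting $u$ is justified by the independence of $I$ together with $u$ and the previously chosen $S$-vertices forming too many edges --- a one-line claim in your sketch but the place where $\Delta_j/\tau^{j-1}$ actually enters; and (b) the density drop does not come for free from ``no heavy configuration survives'' --- one must also argue that the edges incident to the deleted vertices account for an $\varepsilon$-fraction of $e(G)$. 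In short, your proposal is a faithful map of the proof but defers its core; as written it cannot stand in for a proof of the statement, and for the paper's purposes the correct move is exactly what the paper does, namely cite \cite{MS}, Theorem 4.2 as a black box.
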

Below we consider the collection of copies of $H$ in $G$. This can be viewed as an $e(H)$-graph with vertex set $E(G)$, and   edge set comprising collection of edges of $G$ that form a copy of $H$. 
\begin{theorem}[\cite{MS}, Theorem 1.5]\label{thm:morrissaxton}For every $l\geq 2$, there exist  $Q >0$, $\delta >0$ and $k_0\in \mathbb{N}$ such that   the following holds for every $k\geq k_0$ and every $n\in \mathbb{N}$. Given a graph $G$ with $n$ vertices and $kn^{1+1/l}$ edges there exists a collection $\mathcal{F}$  of copies of $C_{2l}$ in $G$ satisfying 

\begin{itemize}
\item[(a)]  $|\mathcal{F}|\geq \delta k^{2l}n^2,$
\item[(b)]  $d_{\mathcal{F}}(\sigma)\leq Q  k^{2l-j-\frac{j-1}{l-1}}n^{1-\frac{1}{l}}$ for every $j$-tuple $\sigma$ for $1\leq j\leq 2l-1$. 
\end{itemize}
\end{theorem}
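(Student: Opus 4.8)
The plan is to follow \cite{MS}. Every copy of $C_{2\ell}$ is the union of two internally disjoint paths of length $\ell$ sharing the same ordered pair of endpoints, so the statement reduces to an analogous \emph{balanced supersaturation lemma for $\ell$-paths}, after which one glues compatible pairs of paths. Before anything else one passes to a subgraph of $G$ that is regular enough — every degree comparable to the average $d = 2kn^{1/\ell}$, or at least $\Delta_1 = O(d)$ — so that the path counts below are controlled by the maximum degree; this is a routine cleaning step (if $G$ itself is too irregular one first locates a dense almost-regular piece inside it) and it only affects the constants $Q,\delta$.

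The path lemma states that this regularised $G$ contains a family $\mathcal P$ of $\ell$-paths with $|\mathcal P|\gtrsim k^\ell n^2$ and, for every set $\tau$ of $i$ edges with $1\le i\le\ell$, $d_{\mathcal P}(\tau)\lesssim k^{\,\ell-i-\frac{i-1}{\ell-1}}\,n^{1-1/\ell}$, together with the refined per-endpoint bound that $\mathcal P$ has $\lesssim k^{\,\ell-i-\frac{i-1}{\ell-1}}$ paths between any fixed ordered pair $(u,v)$ through a given $i$-edge set. The lower bound is pure convexity: the number of homomorphisms of the path on $\ell+1$ vertices into $G$ is $\gtrsim (2e(G))^\ell/n^{\ell-1}\gtrsim k^\ell n^2$ by the Blakley--Roy inequality, and the near-regularity forces all but a negligible fraction of these homomorphic paths to be genuine once $k\ge k_0$. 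The upper bounds are the technical heart and are proved by induction on $\ell$: a path through a prescribed union of arcs is built by extending the arcs one edge at a time, each extension from a fixed vertex has at most $\Delta_1 = O(kn^{1/\ell})$ choices, but a K\H{o}v\'ari--S\'os--Tur\'an-type second-moment argument shows that few of these choices can themselves be extended many further steps, and iterating over the remaining arc-lengths produces exactly the extra saving $k^{-(i-1)/(\ell-1)}n^{1-1/\ell}$ beyond the trivial count. Deleting from the initial pool of homomorphic paths those lying in too many paths of some size (a deletion argument, losing only constants) yields $\mathcal P$.

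Given $\mathcal P$, let $\mathcal F$ be the set of all $2\ell$-cycles $P_1\cup P_2$ with $P_1,P_2\in\mathcal P$ internally disjoint and sharing both endpoints. For (a), the average over the $\binom{n}{2}$ vertex pairs of the number of members of $\mathcal P$ joining them is $\gtrsim k^\ell$, so convexity produces $\gtrsim k^{2\ell}n^2$ ordered pairs of such paths, hence $|\mathcal F|\gtrsim k^{2\ell}n^2$ after discarding the $o(1)$-fraction of non-internally-disjoint pairs (bounded using the degree control and $k\ge k_0$). For (b), fix a set $\sigma$ of $j$ edges lying in some cycle of $\mathcal F$; it is a disjoint union of arcs, and every $C\in\mathcal F$ containing $\sigma$ is determined by one of $O_\ell(1)$ combinatorial types — the cyclic order of the arcs of $\sigma$ around $C$, the lengths of the gaps, and the splitting pair $(u,v)$ — together with the two paths $P_1\supseteq\sigma_1$ and $P_2\supseteq\sigma_2$ from $u$ to $v$, where $\sigma=\sigma_1\sqcup\sigma_2$ is the induced splitting with $|\sigma_s|=j_s$ and $j_1+j_2=j$. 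Multiplying the relevant path counts across the two halves and summing over the bounded number of types bounds $d_{\mathcal F}(\sigma)$, after a short simplification of the exponents (the single surviving factor $n^{1-1/\ell}$ reflecting that the endpoint pair is free on at most one side), by $k^{\,2\ell-j-\frac{j-1}{\ell-1}}\,n^{1-1/\ell}$, as required.

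The main obstacle is the path lemma, and within it the exponent $k^{-(i-1)/(\ell-1)}$: the induction must track not just how many edges have been prescribed but how they are distributed among the arcs that remain to be completed, so that the K\H{o}v\'ari--S\'os--Tur\'an-type saving can be extracted uniformly over all such configurations and then recombined, in the gluing step, with the combinatorics of how an arbitrary $j$-edge set splits between the two halves of a $2\ell$-cycle. By contrast the regularisation, the passage from homomorphic to genuine copies, and the gluing are all routine once the path lemma is in hand, and the hypothesis $k\ge k_0$ is used only to absorb the various $o(1)$-fractions discarded along the way.
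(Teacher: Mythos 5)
This theorem is imported into the paper as a black box from Morris--Saxton (\cite{MS}, Theorem 1.5); the paper gives no proof of it, so there is no ``paper's own proof'' to check your attempt against. All I can do is test whether your reconstruction is internally consistent. At the architectural level it is of the right shape --- regularize, prove a balanced supersaturation lemma for $\ell$-paths with both a free and a fixed-endpoint version of the degree bound, glue internally disjoint pairs of paths that share their endpoint pair, and verify (b) by splitting $\sigma$ across the two halves --- and the Blakley--Roy/Cauchy--Schwarz arithmetic you do for (a) checks out.

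But the gluing step for (b) does not close with the path bounds as you have stated them, and the shortfall is not cosmetic. Take a combinatorial type in which $\sigma$ splits into two nonempty halves $\sigma_1\sqcup\sigma_2$ with $j_1,j_2\ge1$ and $j_1+j_2=j$. Your free bound gives $\lesssim k^{\ell-j_1-\frac{j_1-1}{\ell-1}}n^{1-1/\ell}$ choices of $P_1\supseteq\sigma_1$, and your per-endpoint bound then gives $\lesssim k^{\ell-j_2-\frac{j_2-1}{\ell-1}}$ choices of $P_2\supseteq\sigma_2$ between the endpoints determined by $P_1$. Multiplying, the exponent of $k$ is $2\ell-j-\frac{(j_1-1)+(j_2-1)}{\ell-1}=2\ell-j-\frac{j-2}{\ell-1}$, which exceeds the required $2\ell-j-\frac{j-1}{\ell-1}$ by exactly $\frac{1}{\ell-1}$. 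So for every $j\ge2$ your bound on $d_{\mathcal F}(\sigma)$ is a factor $k^{1/(\ell-1)}$ too large. To make the gluing work, the per-endpoint path bound must be one power of $k^{1/(\ell-1)}$ stronger than what you wrote (something like $k^{\ell-i-\frac{i}{\ell-1}}$); and whatever the correct form is, you should sanity-check it at $i=0$ (where it should give $k^\ell$) and near $i=\ell$, since the version you wrote already drops below $1$ there.

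Separately, the path lemma itself is asserted rather than proved, and you yourself identify it as ``the technical heart.'' Attributing the exponent $k^{-(i-1)/(\ell-1)}$ to ``a K\H{o}v\'ari--S\'os--Tur\'an-type second-moment argument'' does not name a mechanism: the naive arc-extension count in a $\Delta_1=O(kn^{1/\ell})$-bounded graph does not on its own produce any fractional $k$-saving when $\tau$ decomposes into several arcs, and nothing in the sketch explains how the deletion or greedy-balancing step recovers it, nor how the estimate is made uniform over the many ways the $i$ edges can be distributed among the arcs and positioned relative to the free endpoints. Until the path lemma is actually proved --- with the strengthened per-endpoint exponent identified above --- the proposal is an annotated statement of what needs to be true, not a proof of the theorem.
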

\end{section}
\begin{section}{Proofs}
We use the method of hypergraph containers. The main novelty of this paper is in proving a new ``balanced supersaturation" result for linear even cycles. Such a result   states roughly that an $r$-graph $G$ on $n$ vertices with significantly more than
$\ex(n, C^{(r)}_{2\ell})$  edges contains many copies of  $C_{2\ell}^{(r)}$ which are additionally distributed relatively uniformly over the edges of $G$. This was already proved in~\cite{BNS,FMS} but it was not strong enough to establish a corresponding result in random graphs as Theorem~\ref{thm:ourresult}. As we mentioned in the abstract and introduction, the output of our supersaturation result depends crucially on a ``structural" parameter of the underlying $r$-graph in addition to the density; in fact, after suitable regularizing, this is just the codegree of certain pairs of vertices (it is called $\Delta_{12}$ in the proof). Due to this dependence, our supersaturation result appears as part of the proof of the following theorem (see (P5) and (P6)), since it is quite cumbersome to separate it from the rest of the proof.

\begin{theorem}\label{thm:onestep}For every  $r\geq 3$, $\ell\geq 2$, there exist $K_0,n_0\in \mathbb{N}$ and $\eps >0$ such that the following holds for all  $n\geq n_0$ and every $K\geq K_0(\log{n})^{2r(r-1)}$. Given an $r$-graph $G$ with $n$ vertices and $Kn^{r-1}$  edges there exists a collection  $\mathcal{C}$ of at most 
 \begin{equation} \label{eqn:numcon}\exp\left(\frac{1}{\eps} \max \left \{ (\log{n})^{(\ell+3)r^2}n^{\frac{2\ell-1}{2\ell-2}}K^{-\frac{1}{2\ell-2}}, \, n^{\frac{2\ell}{2\ell-1}} (\log{n})^{2(r^2+1)} \right\}\right)\end{equation}
subgraphs of $G$ such that 

\begin{itemize} \item [(i)] every $C_{2l}^{(r)}$-free subgraph of $G$ is a subgraph of some $C\in \mathcal{C}$, 
 \item [(ii)]  $e(C)\leq  \left(1-\frac{\eps}{(\log{n})^{r^2(\ell+1) }}\right)e(G)$, for each $C\in \mathcal{C}$.
\end{itemize}
\end{theorem}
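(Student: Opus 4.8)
The plan is to run the Morris--Saxton hypergraph container machinery (Theorem~\ref{thm:MSContainers}) on the auxiliary hypergraph whose vertices are the edges of $G$ and whose hyperedges are the copies of $C_{2\ell}^{(r)}$ in $G$; the output containers then satisfy (i) because a $C_{2\ell}^{(r)}$-free subgraph is precisely an independent set in this auxiliary hypergraph. To do this we need two ingredients: a lower bound on the number of copies of $C_{2\ell}^{(r)}$ in $G$ (to guarantee the auxiliary hypergraph is dense enough that property (2) of Theorem~\ref{thm:MSContainers} gives a genuine $(1-\eps')$-factor loss in edges, yielding (ii)), and an upper bound on the degrees $d_{\mathcal{F}}(\sigma)$ of $j$-tuples of edges $\sigma$ in the family $\mathcal{F}$ of copies we produce, so that the codegree function $\delta(\cdot,\tau)$ is small for an appropriate $\tau$. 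The number of containers coming out of Theorem~\ref{thm:MSContainers} is roughly $\exp(\tau \cdot e(G) \log(1/\tau)/\eps)$, so to match the bound~\eqref{eqn:numcon} we will want to take $\tau$ of order roughly $\max\{(\log n)^{c} n^{-(r-2)/(2\ell-2)}K^{-1/(2\ell-2)}, n^{-(r-1)/(2\ell-1)}(\log n)^{c'}\}$ up to the various polylog corrections, which is exactly the $\tau$ for which the balanced-supersaturation degree bounds make $\delta(\mathcal{F},\tau)$ bounded.

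The first substantive step is a regularization: pass to a subgraph $G'$ of $G$ that is roughly regular (all vertex degrees within a constant, or polylog, factor of the average $\Theta(Kn^{r-2})$) and, crucially, in which the pair-degrees are controlled --- this is where the parameter $\Delta_{12}$ enters. The point of the paper's innovation is that the supersaturation count and the spreadness of the copies should be stated in terms of the typical codegree of pairs $(u,v)$ lying in a common edge, not merely in terms of the number of edges; so I would partition the edges according to the codegree of a designated pair of their vertices (thinking of $G$ as $r$-partite after a standard random partition of $V(G)$ into parts $V_1,\dots,V_r$, and focusing on the $V_1V_2$-pairs), keep a subfamily on which this codegree $\Delta_{12}$ is essentially constant, and build cycles within it. Concretely, a linear even cycle $C_{2\ell}^{(r)}$ can be viewed as a closed linear walk alternating between "link" pairs in $\partial_{V_1,V_2}$ and the remaining $r-2$ "pendant" vertices of each edge; counting homomorphic copies reduces to counting closed walks of length $2\ell$ in the bipartite "shadow'' graph on $V_1\times V_2$ (weighted by the number of ways to fill in pendant vertices, which is governed by codegrees), and then subtracting degenerate copies. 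A Kővári--Sós--Turán / convexity estimate on this shadow graph, using that it has $\Theta(Kn^{r-2}\cdot n^2 / (\text{something}))$... more precisely $\approx$ (number of edges of $G$)$/\Delta_{12}$ pairs and average shadow-degree a suitable power of $n$, yields $\gtrsim |\mathcal F| \ge$ (polylog)$^{-1}\, \Delta_{12}^{\,?}\,K^{2\ell} n^{2}$-type lower bounds; this is the $r$-graph analogue of Theorem~\ref{thm:morrissaxton}(a), with the base graph estimate of Morris--Saxton (Theorem~\ref{thm:morrissaxton}) applied to the shadow.

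Next I would establish the degree bounds, the analogue of Theorem~\ref{thm:morrissaxton}(b): for a set $\sigma$ of $j$ edges of $G'$, count how many copies of $C_{2\ell}^{(r)}$ in $\mathcal F$ use all of them. If $\sigma$ does not "fit" into a single cycle the count is zero; otherwise the $j$ edges pin down a bounded number of vertices and some shadow-pairs, and the number of completions is controlled by the shadow-graph $j$-tuple degree bound from Theorem~\ref{thm:morrissaxton}(b) together with a $\Delta_{12}$-factor for each free pendant configuration. These bounds are what force the precise shape of the exponents in~\eqref{eqn:numcon}: the two terms in the max correspond to the two regimes in which either the "$K$-dependent" term $(\log n)^{(\ell+3)r^2} n^{(2\ell-1)/(2\ell-2)} K^{-1/(2\ell-2)}$ or the "$K$-free" term $n^{2\ell/(2\ell-1)}(\log n)^{2(r^2+1)}$ dominates $\tau \cdot e(G)$, i.e. whether $\Delta_{12}$ (hence $K$) is large or small. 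With the count and the degree bounds in hand, one checks $\delta(\mathcal F, \tau) \le \eps_0$ for the chosen $\tau$, applies Theorem~\ref{thm:MSContainers} to the auxiliary $e(H)$-graph with $N = e(G)$, and translates: each container $C$ of the auxiliary hypergraph is a set of edges of $G$; discard from $C$ those that were removed during regularization (only a polylog fraction) to get a container in $G$, and note $e(G[C]) \le e(G) - \eps e(\mathcal F\text{-host}) \le (1 - \eps/(\log n)^{r^2(\ell+1)}) e(G)$ since $\mathcal F$ lives on a $(\log n)^{-O(r^2 \ell)}$-fraction-density regularized piece. Finally, iterating this one-step statement (as is standard) would give Theorem~\ref{thm:ourresult}, but that is outside the present claim.

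The main obstacle, I expect, is the balanced-supersaturation count and spreadness \emph{relative to} $\Delta_{12}$: ordinary supersaturation (as in \cite{BNS,FMS}) gives enough copies but does not control their distribution over edges well enough when $p = o(1)$, and the naive degree bounds are off by polynomial factors. The delicate point is to choose the right auxiliary bipartite shadow graph and the right weighting so that (a) the Morris--Saxton graph result (Theorem~\ref{thm:morrissaxton}) can be invoked as a black box on it, and (b) the "lifting'' from shadow copies to genuine linear hypergraph cycles --- including ruling out degenerate/self-intersecting copies, which must be a lower-order term --- costs only the codegree factors and not more. Getting the bookkeeping of pendant vertices and of the codegrees of \emph{all} $r-2$ extra coordinates of each edge (not just the $V_1V_2$ pair) to close up cleanly, while keeping every degree bound simultaneously valid for all $j$-tuples $1 \le j \le (r-1)\ell$ of edges, is where the real work lies.
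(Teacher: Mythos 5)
Your proposal follows the paper's argument essentially step for step: build the $2\ell$-uniform auxiliary hypergraph on $E(G)$ whose edges are copies of $C_{2\ell}^{(r)}$, regularize $G$ to an $r$-partite subgraph with dyadically constant pair-codegrees $\Delta_{ij}$, apply Theorem~\ref{thm:morrissaxton} to the dense shadow $\partial_{12}$, lift the resulting graph cycles to linear hypergraph cycles using the codegree $\Delta_{12}$ to get the balanced supersaturation family, then verify $\delta(S,\tau)$ is small for the two-regime choice of $\tau$ and invoke Theorem~\ref{thm:MSContainers}. The only cosmetic deviations (random versus Erd\H{o}s--Kleitman partition, the claim that $j$ ranges up to $(r-1)\ell$ rather than $2\ell-1$, and an unnecessary ``discard regularization-removed edges from each container'' step) do not change the argument.
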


\begin{proof} The proof has two parts. In the first part we find a large collection of $2\ell$-cycles in $G$ such that the degrees of $j$-tuples in this collection, for all $1\leq j\leq 2\ell-1$, are small enough so that in the second part we are able to apply Theorem~\ref{thm:MSContainers}. Thus, we obtain  containers which contain all $C_{2\ell}$-free subgraphs of $G$. Finally, we show that the containers satisfy (i) and (ii). 

For the first part we pass  to a large $r$-partite subgraph $H'$ of $G$ with  vertex partition $(U_1,U_2, \dots U_r)$ in which all the pairs of vertices lying in different partition classes that are in some edge together, are ``almost regular''. That is, for every $1\leq i < j\leq r $ there exists some number $\Delta_{ij}$ such that if we pick $u_i\in U_i,u_j\in U_j$  such that $u_i$ and $u_j$ are in some edge together then $\Delta_{i,j}\leq d_{H'}(u_i,u_j)\leq 2\Delta_{i,j}$. Moreover, we can guarantee that no vertices in $H'$ are isolated.  Then we find a pair of partition classes, say $U_1,U_2$, with respect to which the shadow is large, that is, $\partial_{U_1U_2}(H')$ is a dense enough  $2$-graph to apply Theorem~\ref{thm:morrissaxton} and obtain a large collection of $2l$-cycles in $\partial_{U_1U_2}(H')$. Then we  expand these cycles  using the regularity of pairs of vertices in $H'$ to $2\ell$-cycles in $H'$, and thus in $G$. The  second part of the proof is more technical.  We will to show that for all $1\leq j\leq 2\ell-1$, the $j$-tuples behave well enough so that the assumptions of  Theorem~\ref{thm:MSContainers} are satisfied.

Let  $Q, \delta_0, k_0$ be obtained from Theorem~\ref{thm:morrissaxton} applied with $\ell$. Let $\eps_0$ be obtained from Theorem~\ref{thm:MSContainers} applied with $r_{\ref{thm:MSContainers}}=2\ell$. We use $a\ll b$ to mean that $a$ is sufficiently small compared to $b$ in order to apply some theorem or to satisfy some inequality in the proof. Choose constants

$$R={r\choose 2}\qquad  \alpha_r = \frac{r!}{2r^{R+r}} \qquad  \beta_r = \frac{\alpha_r}{4Rr^R} \qquad K_0=8 \,k_0^2 \, \beta_r^{-2} \qquad \delta\ll \min\{\eps_0, \, \delta_0\}\qquad \eps=\delta^4.$$ 

By a classical result of Erd\H{o}s-Kleitman~\cite{EK}, $G$ has an $r$-partite subgraph $H$ with 
$r$-partition $(V_1,V_2,\dots, V_r)$
such that $e(H)\geq r!e(G)/r^r$. Let $$D=\{\textbf{s}= \{s_{i,j}\}_{1\leq i<j\leq r } \,: s_{i,j} \in \{1, 2, \ldots,  \lfloor (r-2)\log{n} \rfloor\}\}.$$ 
For each $\textbf{s}=\{s_{i,j}\}_{1\leq i<j\leq r }\in D$, let
$$E(\textbf{s}) = \{ \{v_1,v_2,\dots, v_r\}\in E(H): v_i\in V_i \hbox{ and }
2^{s_{i,j}}\leq d_H(v_i,v_{j})<2^{s_{i,j}+1} \hbox{ for all $i,j$}\}.$$ Since $\cup_{\textbf{s}}{E(\textbf{s})}$ is a partition of  $E(H)$,   by the pigeonhole principle there exists  $\textbf{s}_0 \in D$ such that $$|E(\textbf{s}_0)|\geq \frac{e(H)}{|D|} > \frac{e(H)}{(r\log{n})^R}.$$  For every $1\leq i <j\leq r$ and $s_{i,j}\in \textbf{s}_0$, let  $\Delta_{ij}=2^{s_{i,j}}$. Let $H_0$ be the 
subgraph with   $E(H_0) = E(\textbf{s}_0)$ and  $V(H_0) =\cup_{e \in E(\textbf{s}_0)} e$. Let $U_i=V_i \cap V(H_0)$. By definition, 
\begin{equation}\label{eq:edgesofH}e(H_0)\geq \frac{e(H)}{(r\log{n})^R} \geq \frac{r!e(G)}{r^{R+r}(\log{n})^R}\geq  \frac{r!Kn^{r-1}}{r^{R+r}(\log{n})^R}.\end{equation}

Now construct a sequence of $r$-graphs $H_0\supset H_1\supset \cdots \supset H_b=H'$ as follows. For $0\le a <b$ and each pair $v_i\in U_i,v_j\in U_j$ if $$d_{H_a}(v_i,v_j)< \frac{d_{H}(v_i,v_j)}{2R(r\log{n})^R},$$ 
then delete all edges containing the pair $v_i,v_j$ from $H_a$ and let the resulting $r$-graph be $H_{a+1}$.  By (\ref{eq:edgesofH}), the number of edges deleted during this process is at most 
$$\sum_{1\leq i<j\leq r}\sum_{v_i\in V_i, v_j\in V_j}{\frac{d_H(v_i,v_j)}{2R(r\log{n})^R}} \leq \frac{e(H)}{2(r\log{n})^R} \leq \frac{e(H_0)}{2}.$$
Consequently, the process terminates with $H'$ and $e(H') \ge e(H_0)/2$.  Again using (\ref{eq:edgesofH}) we obtain
\begin{equation}\label{eq:edgesofHprime}
e(H')\geq  \frac{e(H_0)}{2} \ge   \frac{r!Kn^{r-1}}{2r^{R+r}(\log{n})^R} =  \frac{\alpha_r  Kn^{r-1}}{(\log{n})^R}.
\end{equation}
We also delete all the vertices which became isolated in $U_i$ for all $1\leq i \leq r$. 
For simplicity of presentation,  we keep using the letters $U_i$ for the parts of $H'$.  Let $\partial_{i,j}=\partial_{U_i,U_j}(H')$. Now $H'$ satisfies the following, for all $1\leq i<j\leq r$:
\begin{itemize}
\item  [(P1)] there are no isolated vertices in $H'$
\item  [(P2)] if  $v_i\in U_i, v_j\in U_j$  such that there exists $e\in E(H')$ with $\{v_iv_j\}\subseteq e$  then  $$\frac{\Delta_{ij}}{2R(r\log{n})^R}\leq d_{H'}(v_i,v_j) \leq 2\Delta_{ij}.$$
\end{itemize}

Without loss of generality, we may assume $U_1$ is the largest among $U_1, U_2, \dots, U_r$. 

\begin{claim} \label{claim5.3} Either $|\partial_{12}| \geq \sqrt{K} |U_1|^{3/2}$ or  $|\partial_{13}| \geq  \frac{\beta_r\sqrt{K}|U_1|^{3/2}}{(\log{n})^{2R}}$.
\end{claim}
\begin{proof} Suppose $|\partial_{12}| < \sqrt{K} |U_1|^{3/2}$.  Then by (\ref{eq:edgesofHprime})

\begin{equation} \frac{\alpha_r  Kn^{r-1}}{(\log{n})^R} \leq e(H') \leq  |\partial_{12}| (2\Delta_{12})\leq 2\sqrt{K} |U_1|^{3/2}\Delta_{12}.
\end{equation}
It follows that $$\Delta_{12} \geq \frac{\alpha_r\sqrt{K}n^{r-1}}{2|U_1|^{3/2}(\log{n})^R}.$$ 
By (P1), for every vertex $u_1\in U_1$ there is some vertex $u_2\in U_2$ such that $u_1$ and $u_2$ are in an edge of $H'$.  As $d_{H'}(\{u_1, u_2, u_3\}) \le n^{r-3}$ for each $u_3 \in U_3$, the number of distinct vertices $u_3\in U_3$ such that $\{u_1,u_2,u_3\}$ is in some edge  of $H'$ is at least $d_{H'}(u_1,u_2)/n^{r-3}$. In other words, $d_{\partial_{13}}(u_1) \ge d_{H'}(u_1,u_2)/n^{r-3}$. Therefore, by (P2),

$$|\partial_{13}|  =\sum_{u_1 \in U_1} d_{\partial_{13}}(u_1) \ge  \frac{|U_1|\Delta_{12}}{2R(r\log{n})^Rn^{r-3}} \geq \frac{\alpha_r\sqrt{K}n^{2}}{4Rr^R |U_1|^{1/2}(\log{n})^{2R}}\geq \frac{\beta_r\sqrt{K}|U_1|^{3/2}}{(\log{n})^{2R}}$$
which proves the claim.
\end{proof}

By Claim~\ref{claim5.3}, we may assume  without loss of generality that

\begin{equation}\label{eq:partialedges}|\partial_{12}| \geq  \frac{\beta_r\sqrt{K}|U_1|^{3/2}}{(\log{n})^{2R}} \geq \frac{\beta_r\sqrt{K/8} \, m^{3/2}}{(\log{n})^{2R}},
\end{equation} 
where $m=|U_1|+|U_2|$.

\begin{claim}\label{claim:degrees}  $\Delta_{12}\geq 8\ell r^{R+1} (\log{n})^R n^{r-3}$. 
\end{claim}
\begin{proof}
By $(\ref{eq:edgesofHprime})$ and (P2),
$$ \frac{\alpha_r  Kn^{r-1}}{(\log{n})^R }\leq e(H')\leq |\partial_{12}| (2\Delta_{12}) \leq 2n^2 \Delta_{12}.$$ 
Also $K \geq K_0 (\log n)^{4R} \ge 8k_0^2\beta_r^{-2} (\log{n})^{4R}$. Hence  
$$ \Delta_{12} \geq \frac{\alpha_r  Kn^{r-3}}{2(\log{n})^R}  
\ge \frac{\alpha_r  8k_0^2\beta_r^{-2} (\log{n})^{4R}n^{r-3}}{2(\log{n})^R} =
   4\alpha_r \beta_r^{-2} k_0^2 (\log{n})^{3R}n^{r-3}
\ge 
8\ell R r^{R+1} (\log{n})^R n^{r-3},$$
where the last inequality follows since $n$ is sufficiently large compared to $\ell, r, k_0$.
\end{proof}

Set $k=\frac{|\partial_{12}|}{m^{1+1/\ell}}$. By (\ref{eq:partialedges}) and $K \geq 8k_0^2\beta_r^{-2} (\log{n})^{2r(r-1)}$,
$$k = \frac{|\partial_{12}|}{m^{1+1/\ell}} \ge
\frac{\beta_r\sqrt{K/8} \, m^{3/2}}{(\log{n})^{2R}m^{1+1/\ell}} \ge 
\frac{\beta_r \, (k_0\beta_r^{-1} (\log n)^{2R})\, m^{3/2}}{(\log n)^{2R} m^{1+1/\ell}} = k_0m^{\frac{1}{2} - \frac{1}{\ell}} \geq k_0.$$ 

Therefore, by Theorem~\ref{thm:morrissaxton} applied with $k$ and $m$, the  shadow graph $\partial_{12}$ contains a collection $\mathcal{F}$  of copies of $C_{2\ell}$ satisfying 

\begin{itemize}
\item[(P3)]  $|\mathcal{F}|\geq \delta k^{2\ell}m^2 ,$
\item[(P4)] $\Delta_j(\mathcal{F})\leq Q  k^{2l-j-\frac{j-1}{\ell-1}}m^{1-\frac{1}{\ell}}$ for all $1\leq j\leq 2\ell-1$. 
\end{itemize}

Let $C$ be any $2\ell$-cycle included in  $\mathcal{F}$ with consecutive edges $x_1x_2\dots x_{2\ell}$ in the natural cyclic ordering. Since  $d_{H'}(x_i,x_{i+1})\geq \Delta_{12}/2R(r\log{n})^R$ for every $1\leq i <j \leq \ell$, it follows  by (P2) that the number of ways to extend $C$ to some linear $2\ell$-cycle in $H'$ is at least
 $$\frac{\Delta_{12}}{2R(r\log{n})^R}\left(\frac{\Delta_{12}}{2R(r\log{n})^R}-(r-2)n^{r-3}\right)\dots \left(\frac{\Delta_{12}}{2R(r\log{n})^R}-(2\ell-1)(r-2)n^{r-3}\right).$$
By Claim~\ref{claim:degrees}, this is at least  $$\left(\frac{\Delta_{12}}{4R(r\log{n})^R}\right)^{2\ell}. $$
Let $Ext(C)$ be the collection of all cycles $C_{2\ell}^{(r)}$ obtained from $C$ in this manner. Let $\mathcal{F}' =\{Ext(C)|C\in \mathcal{F}\}$, so $\mathcal{F}'$ is collection of edge sets of linear $C_{2\ell}^{(r)}$ in $H'$ (and, as in Theorem~\ref{thm:morrissaxton}, $\mathcal{F}'$ can also be viewed as a $2\ell$-graph). By the previous discussion and (P3) and (P4):
\begin{itemize}
\item[(P5)]  $|\mathcal{F}'|\geq \delta k^{2l}m^2  \left(\frac{\Delta_{12}}{4R(r\log{n})^R}\right)^{2\ell} ,$
\item[(P6)]  $\Delta_j(\mathcal{F}') \leq Q  k^{2\ell-j-\frac{j-1}{\ell-1}}m^{1-\frac{1}{\ell} } (2\Delta_{12})^{2\ell -j}$ for  all $1\leq j\leq 2l-1$.
\end{itemize}

Now let ${S}$ be the $2\ell$-graph with $$V(S)=E(G) \qquad \hbox{  and } \qquad E(S) = {E(G) \choose 2\ell} \cap \mathcal{F}'.
$$
In other words, vertices of $S$ are edges of $G$ and edges of $S$ are copies of $C_{2\ell}^{(r)}$  in $\mathcal{F}'$.  The edges in $E(G)\setminus E(H')$ are isolated vertices in ${S}$, and $ \Delta_j(S)\leq \Delta_j(\mathcal{F}') $. Therefore, (P5) and (P6) translate to the following properties for ${S}$:
\begin{itemize}
\item[(P7)]  $e(S)\geq \delta k^{2\ell}m^2  \left(\frac{\Delta_{12}}{4R(r\log{n})^R}\right)^{2\ell} ,$
\item[(P8)]   $\Delta_j(S) \leq Q  k^{2\ell-j-\frac{j-1}{\ell-1}}m^{1-\frac{1}{\ell}}  (2\Delta_{12})^{2\ell -j}$ for  all $1\leq j\leq 2\ell-1$.
\end{itemize}

\begin{claim}\label{claim:partialdegrees}$  |\partial_{12}| \Delta_{12} \geq \frac{\alpha_r  Kn^{r-1}}{2(\log{n})^R} $.
\end{claim}
\begin{proof} By (\ref{eq:edgesofHprime}) and (P2), 
$$\frac{\alpha_r  Kn^{r-1}}{(\log{n})^R} \leq e(H') \leq  |\partial_{12}| (2\Delta_{12}),$$ and the claim follows.
\end{proof}

Now let us compute the co-degree  function $\delta(S,\tau)$, for the following choice of $\tau$: 
\begin{equation}\label{eq:tau}\tau=\max\left\{\delta^{-3} (\log{n})^{r^2(\ell+2)} K^{-\frac{2\ell-1}{2\ell-2}}n^{-(r-2)+\frac{1}{2\ell-2}}, \, \delta^{-3} (\log{n})^{2r^2}K^{-1} n^{-(r-2)+\frac{1}{2\ell-1}} \right\}.\end{equation}

By (\ref{eq:edgesofH}),
$$e(G)\leq \frac{r^{R+r}(\log{n})^R}{r!}e(H_0) \le
\frac{2r^{R+r}(\log{n})^R}{r!}e(H')
\leq  \frac{2r^{R+r}(\log{n})^R}{r!} |\partial_{12}| (2\Delta_{12}),$$
and it follows that 
\begin{equation}\label{eq:avgdegreeS}d(S)=\frac{2\ell e(S)}{e(G)} \geq \frac{ 2\delta \ell k^{2\ell}m^2  \left(\frac{\Delta_{12}}{4R(r\log{n})^R}\right)^{2\ell}  }{ \frac{4r^{R+r}}{r!}(\log{n})^R |\partial_{12}|\Delta_{12}} \geq \frac{\delta k^{2\ell-1} \Delta_{12}^{2\ell-1}m^{1-1/\ell} }{(\log{n})^{(2\ell +1)R+1}}.
\end{equation}

where in the  second inequality we used $|\partial_{12}|=km^{1+1/\ell}$ and that $n$ is sufficiently large.

By (\ref{eq:avgdegreeS}) and (P8) for all $2\leq j \leq 2\ell -1$, we have 

\begin{align*}\left(\frac{\Delta_j(S)}{d(S)}\right)^{1/(j-1)}\tau^{-1}&\leq   \left(\frac{Qk^{2l-j-\frac{j-1}{l-1}} (2\Delta_{12})^{2\ell -j}(\log{n})^{(2\ell +1) R+1}}{\delta k^{2\ell-1} \Delta_{12}^{2\ell-1}}\right)^{1/(j-1)}\tau^{-1}\\
&\leq  \left(\delta^{-1} (\log{n})^{(2\ell+1) R+2} k^{-(j-1)-\frac{j-1}{l-1}} (\Delta_{12})^{-(j-1)}\right)^{1/(j-1)}\tau^{-1}\\
&\leq  \delta^{-1} (\log{n})^{(2\ell+1) R+2} k^{-1-\frac{1}{\ell-1}} (\Delta_{12})^{-1} \tau^{-1} \\
&=   \delta^{-1}\tau^{-1}(\log{n})^{(2\ell+1) R+2} \times \frac{m^{1 +\frac{2}{\ell-1}}}{(|\partial_{12}|\Delta_{12}) |\partial_{12}|^\frac{1}{\ell-1}} \\
&\leq  \delta^{-1}\tau^{-1}(\log{n})^{(2\ell+4)R+3} \times \frac{m^{1+\frac{2}{\ell-1} -\frac{3}{2(\ell-1)}}}{K^{\frac{2\ell-1}{2\ell-2}}n^{r-1}} \\
&\leq \delta^{-1}\tau^{-1}(\log{n})^{r^2(\ell+2)} K^{-\frac{2\ell-1}{2\ell-2}}n^{-(r-2)+\frac{1}{2\ell-2}} \\
&<\frac{\delta}{2\ell}.
\end{align*}

Above in the first equality we used  that $|\partial_{12}|=km^{1+1/\ell}$, in the fourth inequality we used (\ref{eq:partialedges}) and  Claim~\ref{claim:partialdegrees}, and in the last inequality we used the definition of $\tau$ and $\delta <1/2\ell$.

As for $j=2\ell$, since $\Delta_{2\ell}\leq 1$, it follows
\begin{align*}\left(\frac{\Delta_{2\ell}(S)}{d(S)}\right)^{1/(2\ell-1)} \tau^{-1}&\leq \left(\frac{(\log{n})^{(2\ell +1)R +1}}{\delta k^{2\ell-1} \Delta_{12}^{2\ell-1}m^{1-1/\ell} }\right)^{1/(2\ell-1)}\tau^{-1} \\
&\leq \tau^{-1}\delta^{-1} \frac{(\log{n})^{2 R}m^{1+\frac{1}{\ell}-\frac{\ell-1}{\ell(2\ell-1)}}}{(|\partial_{12}|\Delta_{12})}\\
&\leq \tau^{-1}\delta^{-1} (\log{n})^{3R+1}K^{-1} n^{-(r-2)+\frac{1}{2\ell-1}}\\
&<\frac{\delta}{2\ell},
\end{align*}
where in the second inequality we used  $|\partial_{12}|=km^{1+1/\ell}$ and in the third inequality we used   Claim~\ref{claim:partialdegrees}. So for all $2\leq j \leq 2\ell $,

\begin{equation}\label{ineq:degrees} \left(\frac{\Delta_j(S)}{d(S)}\right)\tau^{-(j-1)}<\left(\frac{\delta}{2\ell}\right)^{j-1}\leq \left(\frac{\delta}{2\ell}\right).\end{equation} 
By (\ref{ineq:degrees}),
$$\delta(S,\tau) = \frac{1}{d(S)}\sum_{j=2}^{2\ell}{\frac{\Delta_{j}(S)}{\tau^{j-1}}}<\delta,$$ and we can therefore apply  Theorem~\ref{thm:MSContainers} to $S$ and obtain a collection of $\mathcal{C}\subseteq  \mathcal{P}(V(S)) = \mathcal{P}(E(G))  $ of at most $$exp\left(\frac{\tau |V(S)|\log{1/\tau}}{\delta}\right)$$  subsets of $V(S)$ such that
\begin{itemize}
\item [(P9)] for each independent set $I$ in  $S$ there exists a ``container'' $C\in \mathcal{C}$ such that  $I\subseteq C$,
\item [(P10)] $e(S[C])\leq (1-\delta) e(S)$, for each container $C\in \mathcal{C}$.
\end{itemize}

By the choice of $\tau$,
$$\left(\frac{\tau |V(S)|\log{1/\tau}}{\delta}\right) \leq \frac{1}{\eps} \max \left \{ (\log{n})^{(\ell+3)r^2}n^{\frac{2\ell-1}{2\ell-2}}K^{-\frac{1}{2\ell-2}}, \, n^{\frac{2\ell}{2\ell-1}} (\log{n})^{2(r^2+1)} \right\}$$
and this proves (\ref{eqn:numcon}).
 We will now prove statements (i) and (ii) of the theorem.

Each $C_{2\ell}^{(r)}$-free subgraph of $G$ is an  independent set  of $S$, and each $C\in \mathcal{C}$ can be viewed as  a subgraph of $G$. So (P9) implies that for each $C_{2\ell}^{(r)}$-free subgraph $G'$ of $G$  there exists a container $C\in \mathcal{C}$ such that  $G'\subseteq C$. This shows that (i) is true. To conclude, it remains to show that (P10) implies that statement (ii) of the theorem holds.

Let   $C\in \mathcal{C}$. Recall that by definition, every vertex $v\in V(S)$ is an edge in $E(G)$. For each $e\in E(S-S[C])$ there exists  $v\in e$ such that $v\in V(S)\setminus C= E(G)\setminus E(C)$. Since  $d_{S}(v)\leq \Delta_1(S)$ for every $v \in V(S)$,
\begin{equation}\label{eq:containers} e(S-S[C]) \leq \Delta_1(S) e(G-C).
\end{equation}

On the other hand,  (P7) and (P8) and $\delta$ sufficiently small imply that
\begin{align*}
\frac{e(S)}{\Delta_1(S)} &\geq \frac{\delta k^{2l}m^2  \left(\frac{\Delta_{12}}{4R(r\log{n})^R}\right)^{2\ell}}{Q  k^{2\ell-1}m^{1-\frac{1}{\ell}}  (2\Delta_{12})^{2\ell -1}}\\
&\geq  \frac{2}{Q(8Rr^{R})^{2\ell} } \cdot \frac{\delta km^{1+\frac{1}{\ell}}\Delta_{12}}{(\log{n})^{2\ell R}} \\
&\geq  \frac{\delta^2 km^{1+\frac{1}{\ell}}\Delta_{12}}{(\log{n})^{2\ell R}}.
\end{align*}
 Thus, together with (\ref{eq:containers})  and (P10) it follows that  $$\delta \, e(S) \leq e(S- S[C])\leq \Delta_1(S) \, e(G-C)\leq e(S) \,  \frac{(\log{n})^{2\ell R}}{\delta^2 km^{1+\frac{1}{\ell}}\Delta_{12}} \, e(G-C).$$
Therefore, 

$$e(G-C) \geq \frac{\delta^3 k\Delta_{12} m^{1+\frac{1}{\ell}}}{ (\log{n})^{2\ell R}}\geq  \frac{ \delta^3 (|\partial_{12}| \Delta_{12})}{ (\log{n})^{2\ell R}} \geq \frac{\delta^3 \alpha_r Kn^{r-1}}{2(\log{n})^{(2\ell+1) R}}\geq \frac{\delta^4 e(G)}{(\log{n})^{(2\ell+1) R}}\geq  \frac{\eps \, e(G)}{(\log{n})^{r^2(\ell+1)}} ,$$   where in the second inequality we used the definition of $k$, in the third one we used Claim~\ref{claim:partialdegrees}, in the fourth that $\delta$ is sufficiently small, and the last one we use $\eps=\delta^4$. As $e(C) = e(G)-e(G-C)$, the proof of (ii) is complete. \end{proof}

\begin{theorem}\label{thm:count} For every $r\geq 3$, $\ell\geq 2$ there exist  $C=C(\ell)$, $K_0\in \mathbb{N}$ such that the following holds for all sufficiently large $n\in \mathbb{N}$ and $K_0(\log{n})^{2r(r-1)}\leq K\leq n^{1/(2\ell-1)} (\log{n})^{2\ell r^2(\ell-1)}$.  There exists a collection $\mathcal{G}_{\ell,r}(n,K)$ of at most 

$$exp\left( C   n^{\frac{2\ell-1}{2\ell-2}}K^{-\frac{1}{2\ell-2}} (\log{n})^{(\ell+3)r^2+1}\right)$$
 $r$-graphs on vertex set $[n]$ such that  $e(G)\leq Kn^{r-1}$ for each $G\in \mathcal{G}_{\ell,r}(n,K)$,  and every $C_{2\ell}^{(r)}$-free $r$-graph on $[n]$ is a subgraph of some $G\in \mathcal{G}_{\ell}(n,K)$. 
\end{theorem}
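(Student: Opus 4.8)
\textbf{Proof proposal for Theorem~\ref{thm:count}.}

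The plan is to obtain $\mathcal{G}_{\ell,r}(n,K)$ by iterating the one-step container refinement provided by Theorem~\ref{thm:onestep}, building a container tree whose leaves have at most $Kn^{r-1}$ edges. First I would observe that any $r$-graph $G$ on $[n]$ with more than $Kn^{r-1}$ edges has $e(G)\le \binom{n}{r}\le n^r$, so if we restrict the host graph to have exactly $K'n^{r-1}$ edges for the current $K'$, Theorem~\ref{thm:onestep} applies whenever $K'\ge K_0(\log n)^{2r(r-1)}$, i.e.\ as long as we have not yet reached the target density. Starting from $K_{\mathrm{n}}^{(r)}$ (or rather from a host with $n^{r-1}\cdot n = n^r$ edges, corresponding to $K\approx n$), repeatedly apply Theorem~\ref{thm:onestep}: each current container $C$ with $e(C)=K'n^{r-1}\ge K_0(\log n)^{2r(r-1)} n^{r-1}$ gets replaced by a family of at most
$\exp\!\big(\tfrac1\eps\max\{(\log n)^{(\ell+3)r^2}n^{(2\ell-1)/(2\ell-2)}(K')^{-1/(2\ell-2)},\, n^{2\ell/(2\ell-1)}(\log n)^{2(r^2+1)}\}\big)$
subcontainers $C'$, each with $e(C')\le(1-\eps/(\log n)^{r^2(\ell+1)})e(C)$. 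Crucially each such $C'$ still contains every $C_{2\ell}^{(r)}$-free subgraph of $C$ by property (i), so transitively every $C_{2\ell}^{(r)}$-free $r$-graph on $[n]$ lands inside some leaf of the tree.

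Next I would bound the number of iterations and the total number of leaves. Because each step multiplies the edge count by a factor $1-\eps/(\log n)^{r^2(\ell+1)}$, and we start at $O(n^r)$ edges and stop once we drop to $Kn^{r-1}$, the number of levels is $t=O\big((\log n)^{r^2(\ell+1)+1}/\eps\big)$ (using $\log(n^r/(Kn^{r-1}))=O(\log n)$). The number of leaves is the product over the path of the branching factors. The dominant branching factor along the whole path is governed by the densest container on the path, i.e.\ by the final target density $K$ (since the exponent $n^{(2\ell-1)/(2\ell-2)}(K')^{-1/(2\ell-2)}$ is increasing as $K'$ decreases, and the second term in the max is dominated by the first exactly in the stated range $K\le n^{1/(2\ell-1)}(\log n)^{2\ell r^2(\ell-1)}$). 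Hence the total count is at most
$\exp\!\Big(t\cdot\tfrac1\eps(\log n)^{(\ell+3)r^2}n^{(2\ell-1)/(2\ell-2)}K^{-1/(2\ell-2)}\Big)
= \exp\!\Big(O\big(n^{(2\ell-1)/(2\ell-2)}K^{-1/(2\ell-2)}(\log n)^{(\ell+3)r^2+r^2(\ell+1)+1}\big)\Big)$,
and absorbing the polylog exponent into the constant $C$ (after checking $(\ell+3)r^2+r^2(\ell+1)+1\le$ something of the claimed form, or simply enlarging the stated polylog exponent — here one should double-check the bookkeeping matches $(\log n)^{(\ell+3)r^2+1}$, adjusting $\eps$/constants as needed) gives the claimed bound. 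Set $\mathcal{G}_{\ell,r}(n,K)$ to be the set of leaves, viewed as $r$-graphs on $[n]$; by construction $e(G)\le Kn^{r-1}$ for each, and each $C_{2\ell}^{(r)}$-free $r$-graph is a subgraph of some leaf.

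The main obstacle — and the one place the argument needs genuine care rather than routine bookkeeping — is controlling the product of branching factors so that it does not blow up by the factor $t$ in the exponent beyond what the statement allows. Two issues arise: (a) the branching factor at intermediate levels (with $K'$ much larger than $K$) is smaller, so a naive "worst factor raised to the $t$-th power" estimate is wasteful but still needs to come out right — one resolves this by noting the geometric decay of $K'$ makes $\sum_{\text{levels}} (K'_i)^{-1/(2\ell-2)}$ a convergent-type sum dominated by its last term $K^{-1/(2\ell-2)}$, up to a constant; (b) one must verify that throughout the iteration the second branch of the max in Theorem~\ref{thm:onestep}, namely $n^{2\ell/(2\ell-1)}(\log n)^{2(r^2+1)}$, never dominates, which is exactly where the hypothesis $K\le n^{1/(2\ell-1)}(\log n)^{2\ell r^2(\ell-1)}$ is used: it guarantees $n^{(2\ell-1)/(2\ell-2)}K^{-1/(2\ell-2)}\ge n^{2\ell/(2\ell-1)}$ up to the allowed polylog, and monotonicity in $K'$ then gives the same at every earlier level. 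Once these two points are nailed down, the rest is the elementary estimate on $t$ and absorption of polylogarithmic factors into $C$.
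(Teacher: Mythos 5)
Your proposal follows the same iterative--container scheme as the paper (start from $K_n^{(r)}$, apply Theorem~\ref{thm:onestep} repeatedly, take the leaves once the edge count drops to $Kn^{r-1}$), and your count $t=O\big((\log n)^{r^2(\ell+1)+1}/\eps\big)$ for the number of refinement levels is the right order of magnitude. But two of the steps you flag as needing care do in fact fail as stated. First, point (b) is backwards. The first branch of the max, $n^{(2\ell-1)/(2\ell-2)}(K')^{-1/(2\ell-2)}(\log n)^{(\ell+3)r^2}$, is \emph{decreasing} in $K'$, whereas the second branch $n^{2\ell/(2\ell-1)}(\log n)^{2(r^2+1)}$ does not depend on $K'$ at all. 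So when the iteration starts (with $K'$ close to $n$, which is far above $n^{1/(2\ell-1)}$) the \emph{second} branch dominates; the hypothesis $K\le n^{1/(2\ell-1)}(\log n)^{2\ell r^2(\ell-1)}$ only gives first-branch dominance near the last levels, and monotonicity in $K'$ then works \emph{against} you at earlier levels, not for you. The paper handles this by splitting the sum over levels into the second-branch levels and the first-branch levels and bounding each total separately.

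Second, the geometric-decay rescue in (a) does not close the gap you correctly identified. Each refinement step shrinks the edge count only by the factor $1-\eps/(\log n)^{r^2(\ell+1)}$, so consecutive terms of $\sum_i (K_i')^{-1/(2\ell-2)}$ differ by a factor $1+O\big(\eps/(\log n)^{r^2(\ell+1)}\big)$; the geometric sum is therefore dominated by its last term not up to a constant, but up to a factor $\approx(\log n)^{r^2(\ell+1)}/\eps$. Combined with $(\log n)^{(\ell+3)r^2}$ from Theorem~\ref{thm:onestep}, this leaves you with a polylog exponent around $(\ell+3)r^2+r^2(\ell+1)$, which cannot be ``absorbed into $C$'' since $C$ must be independent of $n$. (As an aside, the paper's one-line assertion that the number of iterations $m$ is $O(\log n)$ appears to carry the same bookkeeping issue --- the correct count is $\Theta\big((\log n)^{r^2(\ell+1)+1}\big)$ --- but the discrepancy is only in the polylog exponent, which disappears into $n^{o(1)}$ in Theorem~\ref{thm:ourresult}, so the main result is unaffected.)
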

\begin{proof} Let $K_0$ and $\eps$ be obtained from Theorem~\ref{thm:onestep} applied with $\ell$. We apply Theorem~\ref{thm:onestep} iteratively, each time refining the set of
containers obtained at the previous step.   We start with $\mathcal{C}_0=\left\{K_n^{(r)}\right\}$. For $i\geq 1$, let $$K_{i}=\max\left\{\left(1-\frac{\eps}{(\log{n})^{r^2(\ell+1)}}\right)^in,K_0(\log{n})^{2r(r-1)}\right\}.$$ At step $t$  we obtain a collection $\mathcal{C}_t$ of $r$-graphs on $[n]$ such that $e(G)\leq K_tn^{r-1}$ for every $G\in \mathcal{C}_t$  and every $C_{2\ell}^{(r)}$-free graph on $[n]$ is a subgraph of some $G\in \mathcal{C}_t$, and moreover,
\begin{equation}\label{count:total}|\mathcal{C}_t|\leq exp\left(\frac{1}{\eps} \sum_{i=1}^{t}  \max  \left \{ (\log{n})^{(\ell+3)r^2}n^{\frac{2\ell-1}{2\ell-2}}K_i^{-\frac{1}{2\ell-2}}, \, n^{\frac{2\ell}{2\ell-1}} (\log{n})^{2(r^2+1)} \right\} \right).
\end{equation}

For $i\geq 0$, at  step $i+1$, we apply Theorem~\ref{thm:onestep} to each graph $G\in \mathcal{C}_i$ with $e(G)\geq K_{i+1}n^{r-1}$  (note that  $e(G)\leq K_{i}n^{r-1}$, as otherwise $G$ would not have been in $ \mathcal{C}_i$) and obtain a family  $\mathcal{C}(G)$ of subgraphs of $G$ of with
\begin{equation}\label{count:containers}
|\mathcal{C}(G)| \le 
exp\left(\frac{1}{\eps} \max  \left \{ (\log{n})^{(\ell+3)r^2}n^{\frac{2\ell-1}{2\ell-2}}K_{i+1}^{-\frac{1}{2\ell-2}}, \, n^{\frac{2\ell}{2\ell-1}} (\log{n})^{2(r^2+1)} \right\} \right).
\end{equation}
The family $\mathcal{C}(G)$  satisfies the following properties:
\begin{itemize} \item [(a)] every $C_{2l}^{(r)}$-free subgraph of $G$ is a subgraph of some $C\in \mathcal{C}(G)$, 
 \item [(b)]   For each $C\in \mathcal{C}(G)$, $$e(C)\leq \left(1-\frac{\eps}{(\log{n})^{r^2(\ell+1)}}\right)e(G)\leq    \left(1-\frac{\eps}{(\log{n})^{r^2(\ell+1)}}\right) K_in^{r-1} \leq K_{i+1}n^{r-1}.$$
\end{itemize}

If $e(G)\leq K_{i+1}n^{r-1}$  we let $\mathcal{C}(G)=\{G\}$. We define $\mathcal{C}_{i+1}=\cup_{G\in \mathcal{C}_i}{\mathcal{C}(G)}.$ Let $m$ be the minimum such that $K_m\leq K$.  We iterate until we obtain $\mathcal{C}_m$. It is easy to check that  $m=O(\log{n})$. This allows us to get the desired bound on the cardinaly of $\mathcal{C}_m$. Indeed, for all those $0\leq i \leq m-1$ for which   the maximum in (\ref{count:containers}) is obtained as the second term of the expression, in total their contribution to the exponent in (\ref{count:total}) for $\mathcal{C}_m$ is at most

\begin{equation}\label{eq:upperbound1}mn^{\frac{2\ell}{2\ell-1}}(\log{n})^{2(r^2+1)} \leq n^{\frac{2\ell-1}{2\ell-2}}K^{-\frac{1}{2\ell-2}} (\log{n})^{(\ell+3)r^2} .
\end{equation}

For all those $0\leq i\leq  m-1$ for which   the maximum in (\ref{count:containers}) is obtained as the first term of the expression, since $K_i>K$, their total contribution to the exponent in (\ref{count:total}) for $\mathcal{C}_m$ is at most
 
\begin{equation}\label{eq:upperbound2}mn^{\frac{2\ell-1}{2\ell-2}}K^{-\frac{1}{2\ell-2}} (\log{n})^{(\ell+3)r^2} \leq  O(1) n^{\frac{2\ell-1}{2\ell-2}}K^{-\frac{1}{2\ell-2}} (\log{n})^{(\ell+3)r^2+1}.
\end{equation}

Finally for $i=m$, since $K_{m} \le K \le K_{m-1}$, 
$$ \left(1-\frac{\eps}{(\log{n})^{r^2(\ell+1)}}\right) K \leq  \left(1-\frac{\eps}{(\log{n})^{r^2(\ell+1)}}\right) K_{m-1} \leq K_{m} \le  K.$$ Together with $K \leq n^{1/(2\ell-1)} (\log{n})^{2\ell r^2(\ell-1)}$ this implies
\begin{align*}\max \left \{ n^{\frac{2\ell-1}{2\ell-2}}K_m^{-\frac{1}{2\ell-2}} (\log{n})^{(\ell+3)r^2}, \, n^{\frac{2\ell}{2\ell-1}}(\log{n})^{2(r^2+1)}\right\} &=  n^{\frac{2\ell-1}{2\ell-2}}K_m^{-\frac{1}{2\ell-2}} (\log{n})^{(\ell+3)r^2}\\
&=  O(1)  n^{\frac{2\ell-1}{2\ell-2}}K^{-\frac{1}{2\ell-2}} (\log{n})^{(\ell+3)r^2} \numberthis \label{eq:upperbound3}.\end{align*}

Thus, putting (\ref{eq:upperbound1}), (\ref{eq:upperbound2}),  (\ref{eq:upperbound3})  all together we obtain
\begin{align*}
|\mathcal{C}_m| &\leq  \sum_{i=1}^{m}  \max \left \{ n^{\frac{2\ell-1}{2\ell-2}}K_i^{-\frac{1}{2\ell-2}} (\log{n})^{(\ell+3)r^2}, n^{\frac{2\ell}{2\ell-1}}(\log{n})^{2(r^2+1)}\right\} \\
&=  O(1) n^{\frac{2\ell-1}{2\ell-2}}K^{-\frac{1}{2\ell-2}} (\log{n})^{(\ell+3)r^2+1}.
\end{align*}
To finish the proof, we let $\mathcal{G}_{\ell,r}(n,K)=\mathcal{C}_m$.
\end{proof}

\begin{theorem}\label{thm:main} Fix $r\geq 3$ and $\ell\geq 2$. Set 
$$ p_0=n^{-(r-2)}(\log{n})^{-(2\ell-1)\ell r^2} \quad \hbox{ and } \quad p_1   = n^{-(r-2)+\frac{1}{2\ell-2}} (\log{n})^{-3r(r-1)}.$$
Then a.a.s.
$$\ex\left(G_{n,p}, C_{2\ell}^{(r)}\right)\leq \begin{cases} p^{\frac{1}{(2\ell-1)}}n^{1+\frac{r-1}{2\ell-1}}(\log{n})^{(\ell+3)r^2+2}, & \mbox{if } p_0 \le  p\leq p_1\\ 
pn^{r-1} (\log{n})^{(\ell+3)r^2+1},& \mbox{$p>p_1$.} \end{cases}$$
\end{theorem}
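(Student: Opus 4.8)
\textbf{Proof proposal for Theorem~\ref{thm:main}.}

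The plan is to derive Theorem~\ref{thm:main} from the counting/container result Theorem~\ref{thm:count} by a standard first-moment (union bound) argument over the container family. Fix $p$ with $p_0 \le p \le p_1$; I will handle the other range analogously at the end. Set $K = K(p)$ to be the value making the two exponents balance, i.e.\ so that $pn^{r-1}(\log n)^{O(1)}$ and the container bound interact correctly; concretely I would take $K$ of order $p^{-\frac{2\ell-2}{2\ell-1}} n^{-(r-2)\frac{2\ell-2}{2\ell-1} + 1}$ up to polylog factors, which the hypotheses $p_0 \le p \le p_1$ guarantee lies in the admissible window $K_0(\log n)^{2r(r-1)} \le K \le n^{1/(2\ell-1)}(\log n)^{2\ell r^2(\ell-1)}$ required by Theorem~\ref{thm:count}. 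Apply Theorem~\ref{thm:count} with this $\ell, r, n, K$ to obtain the family $\mathcal{G}_{\ell,r}(n,K)$ of at most $\exp\!\big(C\, n^{\frac{2\ell-1}{2\ell-2}}K^{-\frac{1}{2\ell-2}}(\log n)^{(\ell+3)r^2+1}\big)$ host graphs, each with at most $Kn^{r-1}$ edges, such that every $C_{2\ell}^{(r)}$-free $r$-graph on $[n]$ is a subgraph of one of them.

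Next I would run the union bound. If $\ex(G_{n,p}^{(r)}, C_{2\ell}^{(r)}) \ge M$ for a threshold $M$ to be chosen (namely $M = p^{1/(2\ell-1)} n^{1+(r-1)/(2\ell-1)}(\log n)^{(\ell+3)r^2+2}$), then $G_{n,p}^{(r)}$ contains a $C_{2\ell}^{(r)}$-free subgraph with $M$ edges, hence there is some $G \in \mathcal{G}_{\ell,r}(n,K)$ such that $G_{n,p}^{(r)}$ contains at least $M$ edges of $G$. For a fixed $G$ with $e(G) \le Kn^{r-1}$, the number of its edges present in $G_{n,p}^{(r)}$ is stochastically dominated by $\mathrm{Bin}(Kn^{r-1}, p)$ with mean $pKn^{r-1}$; the key point is that the chosen $K$ makes $M \gg pKn^{r-1}$ by a polylog factor (indeed $M / (pKn^{r-1})$ is a power of $\log n$, using $M = p^{1/(2\ell-1)}n^{1+(r-1)/(2\ell-1)}(\log n)^{(\ell+3)r^2+2}$ and the value of $K$), so a Chernoff bound (the first lemma in the Tools section) gives $\Prob[\mathrm{Bin}(Kn^{r-1},p) \ge M] \le \exp(-\Omega(M))$. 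I would then check that $M$ dominates the logarithm of $|\mathcal{G}_{\ell,r}(n,K)|$, i.e.\ $M \ge 2 C\, n^{\frac{2\ell-1}{2\ell-2}}K^{-\frac{1}{2\ell-2}}(\log n)^{(\ell+3)r^2+1}$; this is exactly the calibration forced by the choice of $K$, and it is where the extra $(\log n)$ in the exponent of $M$ (the ``$+2$'' versus ``$+1$'') is spent. Taking the union bound over all $G \in \mathcal{G}_{\ell,r}(n,K)$ then yields a failure probability at most $|\mathcal{G}_{\ell,r}(n,K)| \cdot \exp(-\Omega(M)) = \exp(\log n)^{O(1)} \cdot n^{\ldots} - \Omega(M)) \to 0$, establishing the bound a.a.s.

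For the range $p > p_1$, the argument is the same but simpler: one applies Theorem~\ref{thm:count} with $K$ of order $p(\log n)^{(\ell+3)r^2+1}$ (clamped to the admissible window — note $p > p_1$ forces this $K$ above the lower threshold, and one should check it stays below the upper threshold or else handle $p$ very large separately, where the trivial bound $\ex \le e(G_{n,p}^{(r)}) \approx pn^r \cdot \binom{r}{...}$, or rather the fact that with such density the whole graph has few enough edges, makes the claim immediate), so that the target $M = pn^{r-1}(\log n)^{(\ell+3)r^2+1}$ again exceeds $pKn^{r-1}$ suitably and dominates $\log|\mathcal{G}_{\ell,r}(n,K)|$; then Chernoff plus union bound closes it.

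\textbf{Main obstacle.} The only real work is the bookkeeping of the calibration: choosing $K = K(p)$ so that simultaneously (a) $K$ lies in the interval $[K_0(\log n)^{2r(r-1)},\, n^{1/(2\ell-1)}(\log n)^{2\ell r^2(\ell-1)}]$ demanded by Theorem~\ref{thm:count} precisely when $p_0 \le p \le p_1$, (b) the Chernoff mean $pKn^{r-1}$ is smaller than the target $M$ by a polylogarithmic factor, and (c) $M$ is at least (twice) the logarithm of the number of containers. All three are linear constraints on the exponents of $n$, $p$, and $\log n$, so matching them is routine but fiddly; verifying that the polylog exponents $(\ell+3)r^2+2$ and $(\ell+3)r^2+1$ in the statement are exactly what the computation produces is the part that must be done carefully. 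There is no conceptual difficulty beyond what Theorems~\ref{thm:onestep} and~\ref{thm:count} already provide.
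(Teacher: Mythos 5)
Your overall strategy --- calibrate $K=K(p)$ to balance $pKn^{r-1}$ against the logarithm of the container count from Theorem~\ref{thm:count}, then run a first-moment union bound over containers --- is exactly the paper's argument for the range $p_0\le p\le p_1$, and your use of a Chernoff bound in place of the elementary estimate $\binom{Kn^{r-1}}{m}p^m\le (O(1)\cdot pKn^{r-1}/m)^m$ is only a cosmetic change. Two details need correction, though. Your explicit formula $K\sim p^{-\frac{2\ell-2}{2\ell-1}}n^{-(r-2)\frac{2\ell-2}{2\ell-1}+1}$ has an off-by-one slip: the balancing condition $pKn^{r-1}\approx n^{\frac{2\ell-1}{2\ell-2}}K^{-\frac{1}{2\ell-2}}$ yields $K\sim p^{-\frac{2\ell-2}{2\ell-1}}n^{1-\frac{(r-1)(2\ell-2)}{2\ell-1}}$ (with $r-1$, not $r-2$). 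This is not cosmetic: with $r-2$ your $K$ at $p=p_0$ comes out of order $n\cdot\text{polylog}$, which vastly exceeds the upper admissible bound $n^{1/(2\ell-1)}(\log n)^{2\ell r^2(\ell-1)}$ of Theorem~\ref{thm:count}, so the claim that $p_0\le p\le p_1$ places $K$ in the admissible window would be false; the $r-1$ version lands right at that upper threshold when $p=p_0$, as intended.

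The treatment of $p>p_1$ contains a genuine error. You propose $K$ of order $p(\log n)^{(\ell+3)r^2+1}$ and assert that $p>p_1$ forces this above the lower threshold $K_0(\log n)^{2r(r-1)}$. That assertion is false: since $r\ge3$, $p_1=n^{-(r-2)+\frac{1}{2\ell-2}}(\log n)^{-3r(r-1)}$ is at most $n^{-1/2}$ or so, so $p$ can range over values with $p\cdot\text{polylog}\ll 1$, far below any polylogarithmic threshold, and your $K$ would be rejected by Theorem~\ref{thm:count}. The ``clamping'' you mention as a safety measure is in fact the whole mechanism: for $p>p_1$ one should simply take a fixed polylogarithmic $K$ independent of $p$ (the paper uses $K_2=K_0(\log n)^{(\ell+3)r^2}$) and then verify that $m=pn^{r-1}(\log n)^{(\ell+3)r^2+1}$ dominates both $pK_2n^{r-1}$ (by one factor of $\log n$) and $\log|\mathcal{G}_{\ell,r}(n,K_2)|=O\bigl(n^{\frac{2\ell-1}{2\ell-2}}K_2^{-\frac{1}{2\ell-2}}(\log n)^{(\ell+3)r^2+1}\bigr)$, the second of which is exactly where the hypothesis $p>p_1$ is used. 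The first-moment argument closes this way, but not with the $K$ you wrote down.
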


\begin{proof}
 Let $C$, $K_0$ and $n_0$  be derived from Theorem~\ref{thm:count} such that the statement holds for all $n\geq n_0$ and $K\geq K_0(\log{n})^{2r(r-1)}$. First let us assume $p_0\le p\leq p_1$. Choose $K_1$ such that $p=K_1^{-\frac{2\ell-1}{2\ell-2}}n^{-(r-1)+\frac{2\ell-1}{2\ell-2}}$. Because $p~\geq p_0$, 
$$K_1=p^{-\frac{2\ell-2}{2\ell-1}}n^{-\frac{(r-1)(2\ell-2)}{2\ell-1}+1}\leq  n^{\frac{1}{2\ell-1}} (\log{n})^{2\ell r^2(\ell-1)}.$$

On the other hand, since  $p\leq p_1$,  $$K_1 = p^{-\frac{2\ell-2}{2\ell-1}}n^{-(r-1)\frac{(2\ell-2)}{2\ell-1}+1} \geq (\log{n})^{3r(r-1)\frac{2\ell-2}{2\ell-1}}\geq K_0(\log{n})^{2r(r-1)}.$$
Thus we can apply Theorem~\ref{thm:count} with parameters $\ell, r, K_1$ and obtain a family  $ \mathcal{G}_{\ell,r}(n,K_1)$ of $r$-graphs on  vertex set $[n]$  such that every $r$-graph $G\in \mathcal{G}_{\ell,r}(n,K_1)$ satisfies $e(G)\leq K_1n^{r-1}$ and moreover, every $C_{2\ell}^{(r)}$-free $r$-graph on $[n]$ is a subgraph of some $G\in \mathcal{G}_{\ell,r}(n,K_1)$. By Theorem~\ref{thm:count},

$$ |\mathcal{G}_{\ell,r}(n,K_1)|\leq exp\left( C  n^{\frac{2\ell-1}{2\ell-2}}K_1^{-\frac{1}{2\ell-2}} (\log{n})^{(\ell+3)r^2+1} \right).$$ 
Set $m=p^{\frac{1}{(2\ell-1)}}n^{1+\frac{r-1}{2\ell-1}}(\log{n})^{(\ell+3)r^2+2}$ and suppose that $G_{n,p}$ has a $C_{2\ell}^{(r)}$-free subgraph $H$ with $m$ edges. Then there exists some $G\in \mathcal{G}_{\ell,r}(n,K_1)$ which contains $H$, in particular $G$ contains at least $m$ edges of $G_{n,p}$. Therefore, 
the expected number of such subgraphs $H$ is at most
$$ |\mathcal{G}_{\ell, r}(n,K_1)| {K_1n^{r-1} \choose m} p^m.$$ 
The choice of $K_1$ and $p\leq p_1$
imply that
$$m \gg 
\max \{Cn^{\frac{2\ell-1}{2\ell-2}}K_1^{-\frac{1}{2\ell-2}} (\log{n})^{(\ell+3)r^{2}+1}, pK_1n^{r-1} \} =\max \{\log(\mathcal{G}_{\ell, r}(n, K_1)), pK_1n^{r-1} \}.$$ 
Consequently, 
\begin{align*}
    |\mathcal{G}_{\ell,r}(n,K_1)| {K_1n^{r-1} \choose m} p^m \leq   \left(O(1) \cdot \frac{pK_1n^{r-1}}{m}\right)^m\longrightarrow 0.
\end{align*}

Now let us assume  $p> p_1$ and let $K_2=K_0(\log{n})^{(\ell+3)r^2}$. By Theorem~\ref{thm:count} there exists a collection $\mathcal{G}_{\ell,r}(n,K_2)$ of $r$-graphs on   vertex set $[n]$ such that $e(G)\leq K_2n^{r-1}$ for every $G\in \mathcal{G}_{\ell,r}(n,K_2)$ and such that  every $C_{2\ell}^{(r)}$-free $r$-graph on $[n]$ is a subgraph of some $G\in \mathcal{G}_{\ell, r}(n,K_2)$. Furthermore,
$$|\mathcal{G}_{\ell,r}(n,K_2)|\leq exp\left( C  n^{\frac{2\ell-1}{2\ell-2}}K_2^{-\frac{1}{2\ell-2}} (\log{n})^{(\ell+3)r^2+1} \right).$$ 

Set $m=pn^{r-1} (\log{n})^{(\ell+3)r^2+1}$ and suppose that $G_{n,p}$ has a $C_{2\ell}^{(r)}$-free subgraph $H$ with $m$ edges. Then there exists some $G\in \mathcal{G}_{\ell,r}(n,K_2)$ which contains $H$, in particular $G$ contains at least $m$ edges of $G_{n,p}$. Therefore, 
the expected number of such subgraphs $H$ is at most
$$ |\mathcal{G}_{\ell,r }(n,K_2)| {K_2n^{r-1} \choose m} p^m.$$ 
The choice of $K_2$ and $p >p_1$ imply that
$$m \gg 
\max \{C n^{\frac{2\ell-1}{2\ell-2}}K_2^{-\frac{1}{2\ell-2}}(\log{n})^{(\ell+3)r^2+1}, \,
pK_2n^{r-1} \} =\max \{\log(\mathcal{G}_{\ell, r}(n, K_2)), \, pK_2n^{r-1} \}.$$ 
Consequently, 
\begin{align*}
    |\mathcal{G}_{\ell, r}(n,K_2)| {K_2n^{r-1} \choose m} p^m \leq   \left(O(1) \cdot \frac{pK_2n^{r-1}}{m}\right)^m\longrightarrow 0,
\end{align*}
and the proof is complete.
\end{proof}

\subsection{Lower bounds}
\begin{proposition}\label{prop:lowerbound1} For every $\ell\geq 2, r\geq 3$, if $n^{-r}\ll p\ll  n^{-(r-1)+\frac{1}{2\ell-1}}$, then a.a.s. $G_{n,p}^{(r)}$ contains a $C_{2\ell}^r$-free subgraph with $(1-o(1))p{n \choose r}$  edges.
\end{proposition}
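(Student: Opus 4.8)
The plan is to use a straightforward deletion (alteration) argument. First I would note that a.a.s. $G_{n,p}^{(r)}$ has $(1-o(1))p\binom{n}{r}$ edges by the Chernoff bound, since $p\binom{n}{r} \to \infty$ (as $p \gg n^{-r}$). So it suffices to show that a.a.s. we can delete $o(p\binom nr)$ edges to destroy every copy of $C_{2\ell}^{(r)}$. The natural way is to bound the expected number of copies of $C_{2\ell}^{(r)}$ in $G_{n,p}^{(r)}$, delete one edge from each, and apply Markov's inequality to conclude that a.a.s. this number is $o(p\binom nr)$.

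The key computation is the expected number of (labelled) copies of $C_{2\ell}^{(r)}$ in $G_{n,p}^{(r)}$. A linear cycle $C_{2\ell}^{(r)}$ has $2\ell$ edges and $2\ell(r-1)$ vertices (each of the $2\ell$ graph-cycle vertices is shared by two consecutive edges, contributing $2\ell$ vertices, plus $r-2$ private vertices per edge, contributing $2\ell(r-2)$; total $2\ell + 2\ell(r-2) = 2\ell(r-1)$). Hence the expected number of copies is $\Theta(n^{2\ell(r-1)} p^{2\ell})$. We want this to be $o(p\binom nr) = o(p\, n^r)$, i.e. $n^{2\ell(r-1)} p^{2\ell} = o(p\, n^r)$, equivalently $p^{2\ell-1} = o(n^{r - 2\ell(r-1)})$, i.e. $p = o\bigl(n^{(r - 2\ell(r-1))/(2\ell-1)}\bigr)$. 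One checks $\frac{r - 2\ell(r-1)}{2\ell-1} = \frac{r-1 - (2\ell-1)(r-1)}{2\ell - 1} + \text{(correction)}$; more directly, $r - 2\ell(r-1) = (r-1)(1 - 2\ell) + 1$, so the exponent equals $\frac{(r-1)(1-2\ell)+1}{2\ell-1} = -(r-1) + \frac{1}{2\ell-1}$. This is exactly the upper endpoint $n^{-(r-1)+\frac{1}{2\ell-1}}$ in the hypothesis, so the deletion argument works in the whole claimed range.

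Concretely, I would let $X$ be the number of copies of $C_{2\ell}^{(r)}$ in $G_{n,p}^{(r)}$, compute $\E[X] = O(n^{2\ell(r-1)}p^{2\ell}) = o(p\binom nr)$ using $p \ll n^{-(r-1)+\frac{1}{2\ell-1}}$, and conclude by Markov that a.a.s. $X \le \E[X]\cdot \omega(n)$ for some slowly growing $\omega$ still giving $X = o(p\binom nr)$ — or simply that a.a.s. $X < \tfrac12 p\binom nr$ won't quite follow from Markov directly unless $\E[X] = o(p\binom nr)$, which it is, so a.a.s. $X = o(p\binom nr)$. Then delete one edge from each copy of $C_{2\ell}^{(r)}$: the resulting subgraph is $C_{2\ell}^{(r)}$-free and has at least $e(G_{n,p}^{(r)}) - X \ge (1-o(1))p\binom nr$ edges a.a.s.

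I do not expect any serious obstacle here; the argument is entirely routine. The only mild care needed is: (i) correctly counting vertices and edges of the linear cycle to get the exponent $2\ell(r-1)$, and verifying that the exponent arithmetic lines up with the stated range of $p$; and (ii) making sure the error bounds are uniform — using Chernoff for the lower bound on $e(G_{n,p}^{(r)})$ and Markov (plus $\E[X]/(p\binom nr) \to 0$) for the upper bound on the number of cycles, so that both hold simultaneously a.a.s. If one wants the cleaner statement with $(1-o(1))$ rather than an explicit slowly-growing factor, one takes any $\omega(n) \to \infty$ with $\E[X]\,\omega(n) = o(p\binom nr)$, which exists since $\E[X] = o(p\binom nr)$.
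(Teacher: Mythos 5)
Your proposal is correct and follows essentially the same route as the paper's proof: bound $\E[X]$ for the number of linear $2\ell$-cycles, apply Markov with a slowly-growing function to get $X=o(p\binom{n}{r})$ a.a.s., use concentration of $e(G_{n,p}^{(r)})$, and delete one edge per cycle. Your explicit exponent arithmetic verifying that the threshold $n^{-(r-1)+\frac{1}{2\ell-1}}$ is exactly where $\E[X]$ matches $p\binom{n}{r}$ is a nice addition that the paper leaves implicit.
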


\begin{proof} Note that $\E[e(G_{n,p}^{(r)})]= p{n \choose r} $.  Let ${X}$ denote the number of copies of $C_{2\ell}^{(r)}$ in $G^{(r)}_{n,p}$. Then $$\E[{X}]=O(n^{2\ell(r-1)}p^{2\ell}).$$
Let   $\omega(n) = \frac{\E[e(G^{(r)}_{n,p})]}{\E[X]}$. By assumption on $p$, we have that $\omega(n)\rightarrow \infty$ as $n\rightarrow \infty$. Let $\eps(n)$ by any function such that $\eps(n)\rightarrow0$ as $n\rightarrow \infty$ but such that $\eps(n)\omega(n)\rightarrow \infty$. Then by Markov's inequality $$\Prob\left[X\geq \eps(n) p {n \choose r }\right]\leq \frac{1}{\eps(n) \omega(n) } \longrightarrow 0.$$  Thus, with high probability $X=o(  p{n \choose r})=o(\E[e(G_{n,p}^{(r)})])$. On the other hand, it is a well known fact that for $p\gg n^{-r}$, the random variable $e(G^{(r)}_{n,p})$ is a binomial variable concentrated around its mean, hence with high probability $X=o(e(G^{(r)}_{n,p}))$. Therefore, with high probability, by deleting one edge from each copy of  $C_{2\ell}^{(r)}$ we will obtain a $C_{2\ell}^{(r)}$-free subgraph of $G_{n,p}^{(r)}$ with $(1-o(1))e(G_{n,p}^{(r)})$  edges. 
\end{proof}

\begin{proposition}\label{prop:lowerbound2} For every $\ell\geq 2, r\geq 3$, if $p\gg n^{-(r-1)}$, then a.a.s. $G_{n,p}^{(r)}$ contains a $C_{2\ell}^{(r)}$-free subgraph with $(1-o(1))p{n-1 \choose r-1}$  edges.
\end{proposition}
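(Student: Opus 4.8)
The plan is to exhibit an explicit $C_{2\ell}^{(r)}$-free subgraph of $G_{n,p}^{(r)}$: the \emph{star} centered at a fixed vertex. Fix $v=1$ and let $G_{n,p}^{(r)}[v]$ denote the subgraph consisting of all edges of $G_{n,p}^{(r)}$ containing $v$. First I would observe that, for every $\ell\ge 2$, this subgraph is $C_{2\ell}^{(r)}$-free regardless of the randomness. Indeed, $C_{2\ell}^{(r)}$ is obtained from the graph cycle $C_{2\ell}$ by enlarging each edge with $r-2$ new degree-one vertices, so any two of its edges that are non-consecutive along the cycle are vertex-disjoint; since $2\ell\ge 4$, such a pair exists. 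But every two edges of the star $G_{n,p}^{(r)}[v]$ meet in $v$, so the star contains no two disjoint edges, hence no copy of $C_{2\ell}^{(r)}$. This is the only place where the hypothesis $\ell\ge 2$ is used.

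It then remains to lower bound $e(G_{n,p}^{(r)}[v]) = d_{G_{n,p}^{(r)}}(v)$, which is distributed as $\mathrm{Bin}\!\big(\binom{n-1}{r-1},p\big)$ with mean $\mu := p\binom{n-1}{r-1}$. Since $\binom{n-1}{r-1}=\Theta(n^{r-1})$ and $p\gg n^{-(r-1)}$, we have $\mu\to\infty$. I would then pick any $\eps=\eps(n)\to 0$ with $\eps^2\mu\to\infty$ (for instance $\eps=\mu^{-1/3}$) and apply the Chernoff bound (as stated among the tools, with $a=\eps$, which satisfies $a\le 3/2$ for large $n$):
$$\Prob\left[\,d_{G_{n,p}^{(r)}}(v)<(1-\eps)\mu\,\right]\le 2e^{-\eps^2\mu/3}\longrightarrow 0.$$
Hence a.a.s. the star at $v$ is a $C_{2\ell}^{(r)}$-free subgraph of $G_{n,p}^{(r)}$ with at least $(1-\eps)\,p\binom{n-1}{r-1}=(1-o(1))\,p\binom{n-1}{r-1}$ edges, which is exactly the assertion.

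There is essentially no serious obstacle here. Two points deserve care. First, the construction genuinely relies on $\ell\ge 2$: although a star is also $C_3^{(r)}$-free, for the loose triangle it is far from optimal because of Behrend-type configurations, which is precisely why the behaviour in that case is different (cf.\ the discussion of~\cite{jacque}). Second, the hypothesis $p\gg n^{-(r-1)}$ is exactly what forces $\mu\to\infty$, so that the Chernoff concentration applies and the factor $(1-o(1))$ is meaningful; for $p=\Theta(n^{-(r-1)})$ or smaller one can still take the star but the conclusion degrades. If one prefers to avoid Chernoff, a second-moment argument — using $\mathrm{Var}(d(v))\le \mu$ with Chebyshev, or Markov applied to $\mu-d(v)$ — gives $d(v)=(1-o(1))\mu$ a.a.s.\ just as well. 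This star lower bound is the source of the estimate $\ex(G_{n,p}^{(r)},C_{2\ell}^{(r)})\ge (1-o(1))p\binom{n-1}{r-1}=pn^{r-1+o(1)}$ used in the proof of Corollary~\ref{thm:ourcor}.
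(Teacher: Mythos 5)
Your proof is correct and matches the paper's argument: fix a vertex $v$, take the star at $v$ (which is $C_{2\ell}^{(r)}$-free since any copy of $C_{2\ell}^{(r)}$ with $2\ell\ge 4$ contains two disjoint edges, while all edges of the star meet at $v$), and use Chernoff concentration of $d(v)\sim \mathrm{Bin}\bigl(\binom{n-1}{r-1},p\bigr)$ under $p\gg n^{-(r-1)}$. The paper's proof is terser but identical in substance; your added justification of $C_{2\ell}^{(r)}$-freeness is a welcome clarification.
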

\begin{proof} For any vertex $v$, we have $\E[d(v)]=p { n -1 \choose r-1}$ thus for $p\gg n^{-(r-1)}$ by Chernoff's inequality with high probability, $d(v)=(1+o(1))p { n -1 \choose r-1}$. Therefore, by letting $H$ be the subgraph of $G_{n,p}^{(r)}$ comprising all the edges containing a fixed  vertex $v$ we obtain a $C_{2\ell}^{(r)}$-free subgraph of the required size. 
\end{proof}
\end{section}

\begin{section}{Concluding remarks and some constructions}
It remains an open problem to determine the full behaviour of $\ex(G_{n,p}^{(r)}, C_{2\ell}^{(r)})$. To improve our upper bound for the range  $n^{-(r-2)+o(1)} \leq p \leq n^{-(r-2)+\frac{1}{2\ell -2}}$ using hypergraph containers one would need to improve Theorem~\ref{thm:onestep}, in particular, a better balanced supersaturation result needs to be obtained. Note that to prove our balanced supersaturation result ((P5), (P6)) first we find  a corresponding collection of $2$-cycles in some shadow of the $r$-graph ((P3),(P4)) using Theorem~\ref{thm:morrissaxton}, and then we extend these $2$-cycles to $r$-cycles in the original $r$-graph. We want to emphasize that Theorem~\ref{thm:morrissaxton}  is tight, so to improve our balanced supersaturation result one needs  to find the collection of well-behaving collection of $r$-cycles in the $r$-graph directly.

\begin{figure}[H]
  \centering
    \includegraphics[width=.8\textwidth]{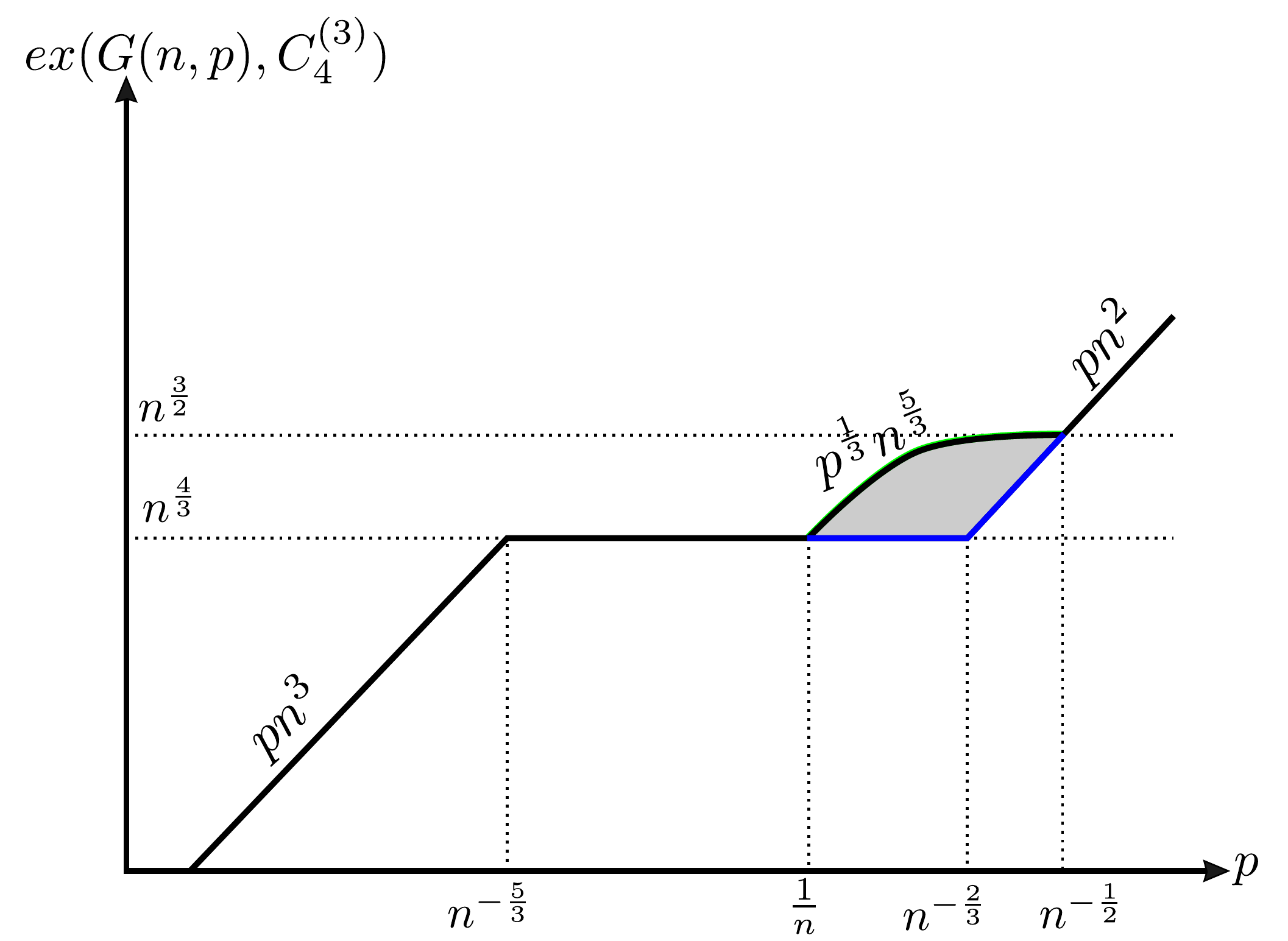}
  \caption{The behaviour of $\ex(G_{n,p}^{(3)}, C_4^{(3)})$}
  \label{c4}
\end{figure}
 Figure 2 shows that we do not know the optimal  behaviour (up to polylog factors) of  $\ex(G_{n,p}^{(3)}, C_{4}^{(3)})$ in the regime $1/n\leq p\leq 1/\sqrt{n}$. We know of various constructions which reach the lower bound $n^{4/3}$ up to polylog factors in the regime $n^{-1}\leq p\leq n^{-2/3}$. This leads  us to conjecture that $n^{4/3}$ is the correct growth rate for this range of $p$.

\begin{conjecture}$$ \ex\left(G_{n,p}^{(3)}, C_{4}^{(3)}\right) = \begin{cases} (1+o(1))e(G_{n,p}^{(3)}), & \mbox{if } {n^{-3}\ll p \ll n^{-5/3}},\\ 
\Theta(n^{4/3+o(1)}), & \mbox{if } {n^{-5/3}\ll p \ll n^{-2/3} },\\
\Theta(pn^{2}),& \mbox{otherwise.} \end{cases}$$
\end{conjecture}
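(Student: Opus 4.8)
The three regimes of the conjecture require rather different arguments, and the lower bounds are essentially already in hand. For $n^{-3}\ll p\ll n^{-5/3}$, Proposition~\ref{prop:lowerbound1} produces a $C_4^{(3)}$-free subgraph with $(1-o(1))e(G_{n,p}^{(3)})$ edges, and since trivially $\ex(G,H)\le e(G)$, the first case is already a theorem. For $p\gg n^{-2/3}$, the star at a fixed vertex (Proposition~\ref{prop:lowerbound2}) gives a $C_4^{(3)}$-free subgraph with $(1-o(1))p{n-1\choose 2}=\Theta(pn^2)$ edges, which dominates $n^{4/3}$ precisely in this range. For $n^{-5/3}\ll p\ll n^{-2/3}$, monotonicity in $p$ of the property ``has a $C_4^{(3)}$-free subgraph with $m$ edges'', applied from $p_0=n^{-5/3+o(1)}$ (where deletion already yields $n^{4/3+o(1)}$ edges), gives $\ex(G_{n,p}^{(3)},C_4^{(3)})\ge n^{4/3+o(1)}$, exactly as in the proof of Corollary~\ref{thm:ourcor}. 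Hence the entire content of the conjecture is the matching upper bound, and for that the plan is to push the container method of Section~5 one step further.

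The reduction is as follows. It suffices to prove the strengthening of Theorem~\ref{thm:count} that, for $r=3$ and $\ell=2$, there is a family of at most $\exp(n^{4/3+o(1)})$ $3$-graphs (``containers'') on $[n]$, each with $n^{2+o(1)}$ edges, such that every $C_4^{(3)}$-free $3$-graph on $[n]$ lies inside one of them. Granting this, running the argument of Theorem~\ref{thm:main} with $K=K_0(\log n)^{O(1)}$ gives $\ex(G_{n,p}^{(3)},C_4^{(3)})\le n^{4/3+o(1)}$ when $p\ll n^{-2/3}$ (the container count dominates the union bound, while $pKn^2\le n^{4/3+o(1)}$) and $\le pn^{2+o(1)}$ when $p\gg n^{-2/3}$ (now $pKn^2$ dominates and $\exp(n^{4/3+o(1)})$ is negligible), which together with the lower bounds settles the middle range and the range $p\gg n^{-2/3}$ up to polylogarithmic factors.

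To obtain $\exp(n^{4/3+o(1)})$ containers one has to improve Theorem~\ref{thm:onestep}, and the obstruction there is precisely the detour through the $2$-shadow: one passes to a regularized dense pair $(U_1,U_2)$, invokes Theorem~\ref{thm:morrissaxton} for $C_4$ in $\partial_{12}(H')$, and lifts. Since Theorem~\ref{thm:morrissaxton} is tight, this route cannot be improved, and what it leaves behind is the term $n^{3/2}K^{-1/2}$ in the container count, which equals $n^{3/2+o(1)}$ at $K=\mathrm{polylog}$. The plan is instead to build the family $\mathcal F$ of copies of $C_4^{(3)}$ in $G$ directly. The useful description is that a $C_4^{(3)}$ is a pair of internally vertex-disjoint linear paths of length $2$ (``cherries'') $\{a,x,b\},\{b,y,c\}$ and $\{a,x',b'\},\{b',y',c\}$ sharing their endpoints $a$ and $c$; one would count cherries by a dyadic decomposition over the codegrees $d_G(a,b),d_G(b,c)$ and over the number of cherries joining a fixed pair $\{a,c\}$, then glue compatible pairs, arranging that the $4$-graph $S$ on $V(S)=E(G)$ whose edges are the chosen copies satisfies, for $j=2$ and $j=3$, $\bigl(\Delta_j(S)/d(S)\bigr)^{1/(j-1)}\le \delta\,K^{-1}n^{-2/3}(\log n)^{-O(1)}$, i.e. no worse than the bound coming from $j=4$ (which is already optimal, since $\Delta_4(S)\le 1$). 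Then $\delta(S,\tau)<\eps$ already at $\tau\approx K^{-1}n^{-2/3}\,\mathrm{polylog}$, so $\tau\,e(G)\approx n^{4/3}\,\mathrm{polylog}$, and after the $O(\log n)$-fold iteration in the proof of Theorem~\ref{thm:count} one still gets $\exp(n^{4/3+o(1)})$ containers.

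The main obstacle is the very feature this paper is built around: pairs $\{a,c\}$ of large codegree in $G$. Closing up a single linear path of length $3$ with loose ends $a,c$ yields $\Omega(d_G(a,c))$ copies of $C_4^{(3)}$, so in the worst case $\Delta_3(S)$ is as large as $\Delta_2(G)\le n$, and inserting this trivial bound into $\delta(S,\tau)$ regenerates exactly the $n^{3/2}$ term. One must therefore show that in a $3$-graph with $Kn^2$ edges the copies of $C_4^{(3)}$ can be chosen so that the mass carried by triples of edges arising from high-codegree pairs is genuinely suppressed --- a sharp, codegree-sensitive supersaturation in place of the lossy regularized one proved here. This appears to require a fairly precise handle on the structure of $C_4^{(3)}$-free (and near-extremal) $3$-graphs, namely how edge count can be exchanged for control on pair-codegrees, and that is where I expect the difficulty to sit. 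A final, separate point: the container method only delivers $pn^{2+o(1)}$, while the conjecture asks for $\Theta(pn^2)$ when $p\gg n^{-2/3}$; removing the polylogarithmic factors would presumably need an extra stability-and-cleaning step, showing that a $C_4^{(3)}$-free subgraph of $G_{n,p}^{(3)}$ with $\omega(pn^2)$ edges must be close to a bounded union of stars, and then controlling the excess.
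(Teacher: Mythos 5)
The statement you chose is a conjecture; the paper does not prove it, and your proposal correctly does not claim to. What you offer instead --- an assessment of what is known and where the difficulty lies --- is accurate and tracks the authors' own closing remarks closely. You are right that the first case is already a theorem (Proposition~\ref{prop:lowerbound1} plus the trivial upper bound $\ex(G,H)\le e(G)$), that the lower bounds in the remaining cases follow from monotonicity at $p\approx n^{-5/3}$ and from the star at a fixed vertex, and that the only open content is the upper bound for $n^{-1+o(1)}\lesssim p\lesssim n^{-1/2+o(1)}$. Your target of $\exp(n^{4/3+o(1)})$ containers of size $n^{2+o(1)}$ is also the right one: for $r=3,\ell=2$ the $j=2\ell$ term in the choice of $\tau$ in~(\ref{eq:tau}) already gives $\tau\approx K^{-1}n^{-2/3}$ and hence $\tau\, e(G)\approx n^{4/3}\,\mathrm{polylog}$, so the inflation to $n^{3/2}$ comes entirely from $j\in\{2,3\}$, which in the paper is driven up by the detour through $\partial_{12}$ and the tightness of Theorem~\ref{thm:morrissaxton}. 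The paper flags exactly this and suggests building the family of $r$-cycles directly in the $r$-graph, which is your plan.

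The genuine gap --- which you name correctly, but do not close --- is the codegree-sensitive supersaturation itself. A linear path of three edges whose two loose endpoints have codegree $d$ in $G$ completes to $d$ copies of $C_4^{(3)}$, so a priori $\Delta_3(S)$ can be as large as $\Delta_2(G)$, and in a $3$-graph with $\Theta(n^2\,\mathrm{polylog})$ edges $\Delta_2(G)$ can be $\Theta(n)$; plugging in that trivial bound regenerates exactly the $n^{3/2}$. Proving that the copies of $C_4^{(3)}$ can be selected so that this mass on high-codegree triples is suppressed --- a balanced supersaturation theorem that genuinely exploits, rather than merely tolerates, large pair codegrees --- is the open problem, and neither the paper nor your proposal resolves it. Two minor observations: the paper's lower-bound constructions (partial Steiner blowups, and a randomly sparsified blowup of a high-girth Lazebnik--Verstra\"ete $3$-graph) are not needed for the lower bound, as you note, but they are what makes the exponent $4/3$ a credible guess rather than merely the point where the two blue curves in Figure~2 happen to meet; and you are right that containers alone would give only $pn^{2+o(1)}$ in the third regime, so the conjectured $\Theta(pn^2)$ without polylogarithmic loss would require a separate argument on top.
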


 Here we describe two constructions which achieve the lower bound $n^{4/3}$ up to polylog factors as they might be of independent interest as well. Note that both of these constructions are not only  $C_4^{(3)}$-free but of girth at least five, where here girth is in the sense of \emph{Berge} cycle.  In comparison, the other construction that reached the lower bound $pn^2$ has very high co-degree. A \emph{Berge cycle} of length $k\geq 2$ in a hypergraph is an alternating sequence
of distinct vertices and edges $v_1, e_1, \dots , v_k, e_k$ such that $v_i
, v_{i+1} \in e_i$
for each $i$ (where indices are
taken modulo $k$). The \emph{girth} of an $r$-graph is the length of the shortest cycle.  

\medskip

\textbf{Blowups of Steiner systems:} For any $1< t\leq \sqrt{n/2}$,  let  $S$ be any partial $(n,t,2)$-Steiner system with $cn^2/t^2$ edges, for some positive constant $c$ (we can in fact take $c=1/4$). Recall that such a partial $(n,t,2)$-Steiner system  is a $t$-graph on $[n]$ where every pair of vertices is contained in at most one $t$-edge. We defer the proof of the existence of such partial Steiner systems to the end. Replace every $t$-edge in $S$ by a complete $3$-graph on $t$ vertices and intersect this graph with $G_{n,p}^{(3)}$ for $p=n^{-2/3}t^{-1} (\log{n})^{-1}$, denoting the resulting graph by $G$. The  expected number of edges in $G$ is    at least $ \Omega(p n^{2}t)$. Let $X$, $Y$, $Z$ denote the  number of  $2$, $3$ and $4$-cycles in $G$ correspondingly. It is not hard to see that   $\E[X]\leq  (c n^2/t^2) \cdot t^4 \cdot p^2  =  c(pnt)^2$, $\E(Y)\leq  cn^2/t^2 \cdot t^2\cdot n \cdot t^3 \cdot p^3 = c(pnt)^3$ and $E(Z)\leq  (cn^2/t^2)^2 \cdot t^8 p^4 =  c^2 (pnt)^4$. The choice of $p$ yields 
$\E[X], \E[Y], \E[Z] \ll \E[e(G)]$. Therefore for any $\beta >0$ by Markov's inequality, $\Prob[X \geq \beta \E[e(G)] ] \leq  \E[X]/\beta \E[e(G)] \rightarrow 0$. Thus a.a.s. $X= o( \E[e(G)] )$ and similarly,  $Y,Z= o( \E[e(G)] )$  as well. On the other hand,  $e(G)$ is a binomial variable and it is not hard to check that by the Chernoff bounds it is concentrated around its mean (using the value of $p$). Hence a.a.s. $e(G)\geq \Omega( p n^2t)$. So by deleting one edge from each $2$-cycle, $3$-cycle and $4$-cycle we obtain a subgraph of $G$ (and hence of $G_{n,p}^{(3)}$) of girth at least five with  $\Omega(pn^2t) = \Omega(n^{4/3}/\log{n})$  edges.   This construction is valid in the range $n^{-7/6}(\log{n})^{-1}\leq  p< (n^{-2/3})\log{n}^{-1}$.

To complete the proof, we need to show the existence of $S$. Given $n$ and $1<t<\sqrt{n/2}$, choose prime $q\ge t$ with $n/2t<q<n/t$ using Bertrand's postulate. We construct a partial $(qt, t,2)$-Steiner system on $qt$ vertices and $q^2$ edges and  add $n-qt$ isolated vertices. The resulting partial $(n,t,2)$-Steiner system has $n$ vertices and at least $n^2/4t^2$ edges.  The $(qt, t,2)$-Steiner system is constructed as follows. Let $V=\{(x,y): x \in [t]\subset \mathbb Z_q, y \in \mathbb Z_q\}$. For each $m,c \in \mathbb Z_q$, define the line $L(m,c) =\{(x,y) \in V: y=mx+c\}$.  For each $x$, there is a unique $y$ such that  $(x,y) \in L(m,c)$, hence $|L(m,c)|=t$. We let the $t$-sets in our Steiner system be the set of all lines. Every two lines have at most one point in common and the number of lines is $q^2$ as required. 
\medskip

\textbf{A construction based on high girth $3$-graphs:}  Let $n^{-2}\ll p\ll (\log{n})^{-2}$, choose $a=p^{-1/2}/6$,  and choose  $m=q^2$ such that $q$ is prime and $\sqrt{n/a}/2<q<\sqrt{n/a}$. Such $q$ exists by Bertrand's postulate. Let $H$ be a $3$-graph on $m$ vertices with $m^{3/2}/6$ edges and girth at least five. Such a $3$-graph exists by the results of Lazebnik and Verstra\"{e}te~\cite{LV}. Let $H(a)$ be the following $3$-graph obtained from $H$. We replace  each vertex $v$ of $H$ by a set $U_v$ of size $a$ and each edge $e$ of $H$ by a subgraph $K_{a}^{(3)}(e)$, a copy of $K_{a,a,a}^{(3)}$, the complete $3$-partite $3$-graph with partition classes of size $a$.  We also add isolated vertices so that the total number of vertices in $H(a)$ is $n$. We think of $H(a)$ and $G_{n,p}^{(3)}$ to be on the same (ordered) set of vertices.  Finally  let $G$ to be the following subgraph of $G_{n,p}^{(3)}$; We ignore all edges of $G_{n,p}^{(3)}$ which contain an isolated vertex of $H(a)$ or are completely contained in the sets $U_v$. For every edge $e\in E(H)$, we keep a maximal matching in the corresponding subgraph  $K_{a}^{(3)}(e)\cap G_{n,p}^{(3)}$. It is not hard to check that $H$ is $C_4^{(3)}$-free, in fact, it is of girth at least five. We claim that a.a.s $G$ has $\Omega(p^{1/4}n^{3/2})$ edges. For every edge $e\in E(H)$, let $M_{e,p}$ denote a maximum matching in $K_{a}^{(3)}(e)\cap G_{n,p}^{(3)}$. It is clear that $e(G)\geq \sum_{e\in E(H)}{|M_{e,p}|}$.  On the other hand, $e\in E(H)$, $|M_{e,p}|\geq X_{e,p}$, where $X_{e,p}$ is the number of isolated edges in $K_{a}^{(3)}(e)\cap G_{n,p}^{(3)}$. Note that 
$$\E[X_{e,p}]=a^3p(1-p)^{3(a^2-1)}\geq pa^3 (1-3a^2p)\geq pa^3/2.$$ Also, $X_{e,p}$ is a $3$-Lipshitz random variable. We set $\sigma=3a^{3/2}p^{1/2}(1-p)^{1/2}$ 
and $t=p^{1/2}a^{3/2}$. Apply Lemma~\ref{azuma} to obtain $$ \Prob[X_{e,p} < E[X_{e,p}]/2]\leq \Prob[|X_{e,p}-\E[X_{e,p}]|> t\sigma ] \leq 2e^{-t^2/4} = 2e^{-a/4}.$$
Taking a union bound over all edges $e\in E(H)$ we obtain that the probability that there is some $e\in E(H)$ for which $X_{e,p}\leq pa^{3/2}/4$ is at most $2m e^{-a/4}\rightarrow 0$, as $n \rightarrow \infty$ since $ a=p^{-1/2}/6\gg \log{n}$. So a.a.s  $X_{e,p}\leq pa^3/4$ simultaneously for all $e\in E(H)$, thus, a.a.s. 

$$e(G)\geq \sum_{e\in E(H)}{|M_{e,p}|} \geq  \sum_{e\in E(H)}{X_{e,p}}\geq \frac{m^{3/2}}{6} \cdot pa^{3}/4 = \Omega(p^{1/4}n^{3/2}).$$

This bound is at least $n^{4/3}$ for  $p\geq n^{-2/3}$, so at $p=n^{-2/3}$ we have a construction which achieves the lower bound $\Omega(n^{4/3})$. Clearly the bound $p^{1/4}n^{3/2}$ is much smaller than the bound $pn^2$  achieved by a star but the fact that it is of girth at least five makes it interesting. It is in fact related to   $\ex_{\textit{lin}}(G_{n,p}^{(3)}, C_4^{(3)})$ where we restrict our attention to linear subgraphs of $G_{n,p}^{(3)}$ only.  
\end{section}
\bigskip

{\bf Acknowledgments.} 
We wish to thank Jozsi Balogh, Wojtek Samotij and Rob Morris for several insightful discussions about this problem.

\end{document}